\newtheorem{assumption}{Assumption}
\newtheorem{remark}{Remark}
\def\inner#1#2{ \langle #1, #2 \rangle}
\def\diag#1{ \text{diag}(#1) }
\begin{document}
\title{Applications of gauge duality in robust principal component analysis and semidefinite programming}

\author{Shiqian Ma\footnotemark[1]   \and  Junfeng Yang\footnotemark[2]}
\renewcommand{\thefootnote}{\fnsymbol{footnote}}

\footnotetext[1]{Department of Systems Engineering and Engineering Management, The Chinese University of Hong Kong, Hong Kong (Email: {\tt sqma@se.cuhk.edu.hk}).
This author was supported by Hong Kong Research Grants Council General Research Fund (Project ID: 14205314).
}

\footnotetext[2]{Department of Mathematics, Nanjing University, China  (Email: {\tt jfyang@nju.edu.cn}). This author was supported by the Natural Science Foundation of China (NSFC-11371192). This work was done while this author was visiting the Chinese University of Hong Kong.
}

\renewcommand{\thefootnote}{\arabic{footnote}}
\date{\today}
\maketitle

\begin{abstract}
Gauge duality theory was originated by Freund [Math. Programming, 38(1):47-67, 1987] and was recently further investigated by Friedlander, Mac{\^e}do and Pong [SIAM J. Optm., 24(4):1999-2022, 2014]. When solving some matrix optimization problems via gauge dual, one is usually able to avoid full matrix decompositions such as singular value and/or eigenvalue decompositions. In such an approach, a gauge dual problem is solved in the first stage, and then an optimal solution to the primal problem can be recovered from the dual optimal solution obtained in the first stage. Recently, this theory has been applied to a class of \emph{semidefinite programming} (SDP) problems with promising numerical results [Friedlander and Mac{\^e}do, SIAM J. Sci. Comp., to appear, 2016].
In this paper, we establish some theoretical results on applying the gauge duality theory to robust  \emph{principal component analysis} (PCA) and general SDP. For each problem, we present its gauge dual problem, characterize the optimality conditions for the primal-dual gauge pair, and validate a way to recover a primal optimal solution from a dual one. These results are extensions of [Friedlander and Mac{\^e}do, SIAM J. Sci. Comp., to appear, 2016] from nuclear norm regularization to robust PCA and from a special class of SDP which requires the coefficient matrix in the linear objective to be positive definite to SDP problems without this restriction. Our results provide further understanding in the potential advantages and disadvantages of the gauge duality theory.
\end{abstract}

\begin{keywords}
Gauge optimization, gauge duality, polar function, antipolar set, singular value decomposition, robust principal component analysis, semidefinite programming.
\end{keywords}


\thispagestyle{plain}

\section{Introduction}
Nowadays, optimization problems with matrix decision  variables, smooth or nonsmooth, convex or nonconvex,  including  low-rank matrix approximation and completion, robust \emph{principal component analysis} (PCA) and \emph{semidefinite programming} (SDP) problems, just to name a few, are prevalent due to their tremendous applications in the  era of big data.
Most of the classical  approaches for solving these problems are iterative and require full matrix decompositions at each iteration, such as \emph{singular value decomposition} (SVD) and eigenvalue decomposition, etc., which are costly or even impossible for large scale computations.
As a consequence, when the size of the matrix variable is becoming   larger and larger, as is the case in many massive data analysis tasks, classical approaches inevitably meet bottlenecks caused by the computation of full matrix decompositions.
To overcome this difficulty, efficient and robust algorithms which do not rely on full matrix decompositions are undoubtedly attractive.

In this paper, we focus on two specific problems, i.e., robust PCA and SDP, and present for each problem a gauge duality approach, which  in principle is  able to avoid full matrix decompositions.
Gauge duality theory was originally developed in \cite{Fre87} and was recently further studied under a more concrete setting in \cite{FMP14}.
Lately, it has  been successfully applied in \cite{FM15} to solving a form of trace minimization problem that arises naturally from the lifting reformulations of phase retrieval and blind deconvolution problems. In particular, the trace minimization problem considered in \cite{FM15} contains a special class of SDP problems. The numerical results reported there for phase retrieval and blind deconvolution   are favorable.
Roughly speaking, gauge duality approach is a two-stage method. In the first stage, a gauge dual problem is solved via any algorithms relying merely on subgradients, e.g., subgradient and bundle type methods \cite{Sho85,LNN95,Kiw95,BN05}; and, in the second stage, an optimal solution to the primal problem is recovered from the dual optimal one obtained in the first stage.
Since the computation of a subgradient of the gauge dual objective function does not rely on full matrix decomposition and, furthermore, the projection on the gauge dual constraints is very simple, this approach is potentially advantageous compared to the classical Lagrange dual approach.
We emphasize that our main focus in this paper is on the theoretical side, that is to establish gauge duality theory for both robust PCA and SDP, while the   realization  of the established theory  deserves further investigations.

The rest of this paper is organized as follows. In Section \ref{sec:gauge-duality}, we briefly introduce the gauge duality theory, including
the original work \cite{Fre87} and the recent advances \cite{FMP14,FM15}. In Sections \ref{sec:rpca} and \ref{sec:sdp}, we study gauge duality   for robust PCA and SDP, respectively. For each problem, we derive its gauge dual problem, characterize the optimality conditions, and validate a way to recover a primal optimal solution from a dual one. Finally, we give some concluding remarks in Section \ref{sec:concluding}.

\section{Review of gauge duality}\label{sec:gauge-duality}
In this section, we review  briefly the gauge duality theory \cite{Fre87,FMP14} and the recent applications in low-rank optimization \cite{FM15}. Our contributions are also summarized in this section. We will see in Sections \ref{sec:rpca} and \ref{sec:sdp} that both robust PCA and SDP  can be viewed, after reformulation, as gauge optimization problems, and methods based on gauge duality can thus be designed.

\subsection{Gauge duality: the work of Freund \cite{Fre87}}\label{subsc:Freund}
In the original work \cite{Fre87}, Freund defined gauge optimization as minimizing a closed gauge function over a closed convex set.
An extended real valued function $\kappa: \Re^n\rightarrow \Re \cup \{+\infty\}$ is called a gauge function if it is nonnegative, convex, positively homogeneous,  and vanishes at the origin.
In particular, norms   are symmetric gauges which are finite everywhere.
According to \cite{Fre87}, a general nonlinear gauge optimization problem is to minimize a closed gauge function $\kappa$ over a closed convex set $X \subset \Re^n$, i.e.,
\begin{equation}\label{NG}
  \min\nolimits_x \{ \kappa(x) \ | \  x \in X\}.
\end{equation}
A point $x$ is said to be feasible for \eqref{NG} if $x\in X$, otherwise it is infeasible. If $x\in X$ and $\kappa(x) = +\infty$, then $x$ is essentially infeasible.
If $x\in X$ and $\kappa(x) < +\infty$, then $x$ is said to be strongly  feasible.
In order to define the gauge dual problem of \eqref{NG}, we need the notions of polar function of a gauge and the antipolar set of a given closed convex set.
The polar function of a gauge  $\kappa$ is defined by
\begin{equation}\label{polar-fun}
  \kappa^\circ(y) = \inf\{\mu > 0 \ | \inner{x}{y} \leq \mu  \kappa(x) \text{ for all } x \},
\end{equation}
see, e.g., \cite{Roc70}. According to this definition,  it is apparent that $\inner{x}{y} \leq \kappa(x)\kappa^\circ(y)$ for any
$x \in \text{dom}(\kappa)$ and $y\in \text{dom}(\kappa^\circ)$, where $\text{dom}(\cdot)$ denotes the effective domain of a function.
An equivalent definition of the polar function of a gauge $\kappa$ is given by
  $\kappa^\circ(y) = \sup\{\inner{x}{y} \ | \ \kappa(x) \leq 1 \}$.
Thus, the polar function of any given norm is simply the corresponding dual norm.
The antipolar set $X'$ of a closed convex set $X$ is given by
$X' = \{y\in\Re^n \ | \ \inner{x}{y} \geq 1 \text{ for all } x \in X\}$.
%
%
According to \cite{Fre87}, the nonlinear gauge dual problem of \eqref{NG} is defined as  minimizing the  polar function $\kappa^\circ$ over the antipolar set $X'$, i.e.,
\begin{equation}\label{NGD}
  \min\nolimits_y \{\kappa^\circ(y) \ | \  y \in X'\}.
\end{equation}
It is easy to show from \eqref{polar-fun} that the polar function of a gauge function is again a gauge function. Thus, the gauge dual problem \eqref{NGD} is also a gauge optimization problem.
A set $X$  is called a ray-like set if  $x + \theta y\in X$ for all $x, y\in X$ and $\theta\geq 0$.
It can be shown that the antipolar $X'$ of any set $X$ is also  ray-like, and if $X$ is closed, convex, ray-like, and does not contain the origin, then $X'' = X$.
Consequently,  the gauge dual of \eqref{NGD} is precisely \eqref{NG} whenever $\kappa$ is closed and $X$ is closed, convex, ray-like and does not contain the origin.

The following theorem summarizes the main theoretical results presented in \cite{Fre87} for the primal-dual gauge pair \eqref{NG} and \eqref{NGD}. In particular, the strong duality results \cite[Theorem 2A]{Fre87} require the constraint set to satisfy the  ray-like condition described above.

\begin{theorem}[Freund's gauge duality, {\cite[Theorem 1A, Theorem 2A]{Fre87}}]\label{Freund-thm}
Let $p^*$ and $d^*$ be the optimal objective value  of \eqref{NG} and \eqref{NGD}, respectively.
Denote the relative interior of a set by $\mathrm{ri}(\cdot)$.
\begin{enumerate}
  \item If $x$ and $y$ are strongly feasible for  \eqref{NG} and \eqref{NGD}, respectively, then $\kappa(x)\kappa^\circ(y)\geq 1$. Hence, $p^*d^*\geq 1$.
  \item If $p^* = 0$, then \eqref{NGD} is essentially infeasible, i.e., $d^*=+\infty$.
  \item If $d^* = 0$, then \eqref{NG} is essentially infeasible, i.e., $p^*=+\infty$.
  \item If $\bar x$ and $\bar y$ are strongly feasible for  \eqref{NG} and \eqref{NGD}, respectively, and $\kappa(\bar x)\kappa^\circ(\bar y) = 1$, then they are respectively optimal for \eqref{NG} and \eqref{NGD}.
  \item Assume that $\kappa$ is closed and $X$ is closed, convex, and ray-like.
  If  $\mathrm{ri}(X) \cap   \mathrm{ri}(\mathrm{dom}(\kappa)) \neq \emptyset$ and $\mathrm{ri}(X') \cap   \mathrm{ri}(\mathrm{dom}(\kappa^\circ)) \neq \emptyset$,
  then $p^*d^*=1$, and each problem attains its optimum value.
\end{enumerate}
\end{theorem}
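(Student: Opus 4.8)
The plan is to dispatch the five parts in the order given, since parts 1--4 are elementary consequences of two basic inequalities and only part 5 requires genuine work.

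The workhorse is the pair of inequalities available for any strongly feasible $x$ and $y$: the polar inequality $\inner{x}{y} \le \kappa(x)\kappa^\circ(y)$ (noted immediately after \eqref{polar-fun}) together with the antipolar inequality $\inner{x}{y} \ge 1$ (the defining property of $X'$). Chaining them gives $1 \le \kappa(x)\kappa^\circ(y)$, which is exactly part 1 for a fixed pair. To pass to $p^*d^* \ge 1$, I would fix a strongly feasible $y$, note $\kappa^\circ(y)>0$ (otherwise $1\le\inner{x}{y}\le 0$ for strongly feasible $x$, a contradiction), divide through and take the infimum over $x\in X$ to get $p^*\kappa^\circ(y)\ge 1$, then take the infimum over $y\in X'$. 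If either problem has no strongly feasible point, the product $p^*d^*$ carries a $+\infty$ factor and the bound is trivial.

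For part 2 I would argue by contradiction: if $p^*=0$ there is a sequence $x_k\in X$ with $\kappa(x_k)\to 0$, while if \eqref{NGD} were not essentially infeasible there would exist $y\in X'$ with $\kappa^\circ(y)<+\infty$; part 1 then forces $\kappa(x_k)\kappa^\circ(y)\ge 1$ for all $k$, contradicting $\kappa(x_k)\kappa^\circ(y)\to 0$. Part 3 is the mirror image with the roles of $x$ and $y$ interchanged, and no biduality hypothesis is needed because the argument invokes part 1 in exactly the same direction. Part 4 is again a cancellation: for any strongly feasible $x$, part 1 gives $\kappa(x)\kappa^\circ(\bar y)\ge 1=\kappa(\bar x)\kappa^\circ(\bar y)$, and since $\kappa(\bar x)\kappa^\circ(\bar y)=1$ with both factors finite forces $\kappa^\circ(\bar y)>0$, we may cancel it to get $\kappa(x)\ge\kappa(\bar x)$, so $\bar x$ is optimal; symmetrically $\bar y$ is optimal and $p^*d^*=\kappa(\bar x)\kappa^\circ(\bar y)=1$.

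The hard part is part 5, strong duality with attainment, where the two relative-interior hypotheses serve as Slater-type constraint qualifications. In view of part 1 it suffices to exhibit a strongly feasible pair $(\bar x,\bar y)$ with $\kappa(\bar x)\kappa^\circ(\bar y)=1$ and then invoke part 4. I would first use the primal qualification $\mathrm{ri}(X)\cap\mathrm{ri}(\mathrm{dom}(\kappa))\neq\emptyset$, together with closedness of $\kappa$ and of the ray-like set $X$, to guarantee that $p^*$ is attained at some $\bar x$, the relative-interior and recession-cone conditions ruling out escape of the infimum to infinity. I would then apply a separation/subgradient argument at $\bar x$: the first-order optimality condition for $\min\{\kappa(x):x\in X\}$ combines the normal cone of $X$ with the subdifferential of $\kappa$ to produce an antipolar-feasible $\bar y$ with $\kappa^\circ(\bar y)<+\infty$ satisfying the complementarity relations $\inner{\bar x}{\bar y}=1$ and $\inner{\bar x}{\bar y}=\kappa(\bar x)\kappa^\circ(\bar y)$, whence $\kappa(\bar x)\kappa^\circ(\bar y)=1$; the dual qualification $\mathrm{ri}(X')\cap\mathrm{ri}(\mathrm{dom}(\kappa^\circ))\neq\emptyset$ secures the analogous attainment on the dual side and excludes the degenerate cases of parts 2 and 3. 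The main obstacle is precisely this construction of the dual certificate: converting the optimality condition at $\bar x$ into an antipolar-feasible $\bar y$ that makes both inequalities $\inner{\bar x}{\bar y}\ge 1$ and $\inner{\bar x}{\bar y}\le\kappa(\bar x)\kappa^\circ(\bar y)$ tight at once, which is where the relative-interior hypotheses are indispensable and where one must carefully track positive homogeneity and the possibility that $\kappa$ or $\kappa^\circ$ fails to be finite everywhere.
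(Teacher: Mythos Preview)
The paper does not supply its own proof of this theorem: it is stated purely as a summary of \cite[Theorem 1A, Theorem 2A]{Fre87}, with no accompanying argument. So there is no ``paper's proof'' to compare against; I can only assess your proposal on its merits and against the standard route.

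Your treatment of parts 1--4 is correct and is the standard argument: chain the antipolar inequality $\inner{x}{y}\ge 1$ with the polar inequality $\inner{x}{y}\le\kappa(x)\kappa^\circ(y)$, then take infima and handle the degenerate cases by contradiction. Nothing to add there.

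Part 5 has a genuine gap. You propose to first obtain primal attainment from the primal qualification $\mathrm{ri}(X)\cap\mathrm{ri}(\mathrm{dom}(\kappa))\neq\emptyset$ together with closedness and the ray-like property, invoking unspecified ``recession-cone conditions ruling out escape of the infimum to infinity.'' But no such coercivity or recession hypothesis is present in the statement, and a relative-interior (Slater-type) condition on the \emph{primal} does not by itself force the \emph{primal} infimum to be attained; in Fenchel/Lagrange duality it is exactly the opposite --- a primal constraint qualification yields zero duality gap and attainment on the \emph{dual} side. Your plan then builds the dual certificate $\bar y$ out of the optimality conditions at a primal minimizer $\bar x$ that you have not legitimately secured.

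The standard route (Freund's, and implicitly the Fenchel-based one in \cite{FMP14} that the paper describes just after this theorem) runs in the other order: use the primal qualification to get strong duality and a dual minimizer $\bar y$; then invoke the ray-like hypothesis to obtain $X''=X$ and closedness of $\kappa$ to obtain $(\kappa^\circ)^\circ=\kappa$, so that \eqref{NG} is the gauge dual of \eqref{NGD}; finally apply the same argument with the roles reversed, using the dual qualification $\mathrm{ri}(X')\cap\mathrm{ri}(\mathrm{dom}(\kappa^\circ))\neq\emptyset$ to obtain a primal minimizer $\bar x$. This is precisely where the ray-like assumption earns its keep, and it explains why both relative-interior hypotheses are needed symmetrically. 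If you rework part 5 along these lines --- primal CQ $\Rightarrow$ dual attainment, biduality, dual CQ $\Rightarrow$ primal attainment --- the argument goes through.
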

The inequality $p^*d^*\geq 1$ is called weak gauge duality, which holds for the primal-dual gauge pair as long as both $p^*$ and $d^*$ are positive and finite. On the other hand, the equality $p^*d^* = 1$ is called strong gauge duality. The sufficient conditions given in part 5 of Theorem \ref{Freund-thm} to guarantee strong duality and attainments of the primal and the dual optimum values are relatively easy to verify and are different from   constraint qualifications commonly studied for Lagrange dual. This theory was applied in \cite{Fre87} to linear programming, convex quadratic programming, and $\ell_p$-norm minimization problems with linear constraints.

\subsection{Gauge duality: the work of Friedlander, Mac{\^e}do and Pong \cite{FMP14}}
Very recently, Friedlander, Mac{\^e}do and Pong further studied gauge duality theory in \cite{FMP14}
under a more concrete setting, that is
\begin{eqnarray}\label{Fried-GP}
  \min\nolimits_x \{ \kappa(x) \ | \ \rho(Ax - b) \leq \varepsilon \}.
\end{eqnarray}
Here $\kappa$ and $\rho$ are closed gauge functions, and $A\in\Re^{m\times n}$, $b\in\Re^m$ and $\epsilon \geq 0$ are given.
The formulation \eqref{Fried-GP} is of particular interest because it encompasses many important data analysis applications in which  the constraint set $\{x \ | \ \rho(Ax - b) \leq \varepsilon \}$ represents data fidelity and the objective function $\kappa(x)$ represents certain regularization.
It is easy to show, see \cite[Sec. 4.2]{FMP14}, that the Lagrange dual problem of \eqref{Fried-GP} is given by
\begin{eqnarray}\label{Fried-LD}
\max\nolimits_y \{   b^\top  y - \varepsilon \rho^\circ(y) \ | \   \kappa^\circ(A^\top y)  \leq 1\}.
\end{eqnarray}
To derive the gauge dual problem, one needs to express the antipolar set of $\{x \ | \ \rho(Ax - b) \leq \varepsilon \}$ explicitly in terms of the given function and data.  For this purpose, a set of antipolar calculus rules were derived in \cite[Sec. 3]{FMP14} based on basic convex analysis results in \cite{Roc70}. In particular, these rules lead to a neat calculation of the antipolar set of $\{x \ | \ \rho(Ax - b) \leq \varepsilon \}$.
The gauge dual problem of \eqref{Fried-GD} is given by
\begin{eqnarray}\label{Fried-GD}
   \min\nolimits_y \{  \kappa^\circ(A^\top y) \ | \  b^\top  y - \varepsilon \rho^\circ(y)  \geq 1\}.
\end{eqnarray}
By comparing  the two  dual problems, one can see that the functions $b^\top  y - \varepsilon \rho^\circ(y) $ and $\kappa^\circ(A^\top y)$ switched their positions, and ``$\leq$" in \eqref{Fried-LD} is replaced by ``$\geq$" in  \eqref{Fried-GD}.
This could hint that gauge dual problem is preferred compared to Lagrange dual
when this switching simplifies the constraints, e.g., when the projection on the set $\{y \ | \ b^\top  y - \varepsilon \rho^\circ(y)  \geq 1\}$ is much easier than on $\{y \ | \ \kappa^\circ(A^\top y)  \leq 1\}$.
We refer to \cite{FMP14} for details on the derivation of \eqref{Fried-GD}.
Besides, some variational properties of the  optimal value of \eqref{Fried-GP} are also analyzed in \cite[Sec. 6]{FMP14} in terms of perturbations in $b$ and $\varepsilon$.

As mentioned in Section \ref{subsc:Freund}, Freund's strong gauge duality  results \cite[Theorem 2A]{Fre87} require the ray-like property of the constraint set. By connecting gauge dual problem \eqref{NGD} with the Fenchel dual problem of \eqref{NG}, this ray-like requirement on $X$ was removed in \cite{FMP14} and strong gauge duality still holds between \eqref{NG} and \eqref{NGD} as long as certain conditions analogous to those required by Lagrange duality are satisfied. Roughly speaking, this connection says that, under regularity conditions, gauge dual problem \eqref{NGD} is equivalent to the Fenchel dual problem of \eqref{NG} in the sense that solving one of them will solve the other, see \cite[Theorem 5.1]{FMP14}. The strong duality results  given in \cite[Corollary 5.2, Corollary 5.4]{FMP14}  were derived based on this connection and are summarized below.

\begin{theorem}[Strong duality, {\cite[Corollary 5.2, Corollary 5.4]{FMP14}}]\label{FMP-thm}
Let $p^*$ and $d^*$ be the optimal objective value  of \eqref{NG} and \eqref{NGD}, respectively.
\begin{enumerate}
\item If  $\mathrm{dom} (\kappa^\circ) \cap   X' \neq \emptyset$ and $\mathrm{ri} (\mathrm{dom}(\kappa)) \cap   \mathrm{ri}(X) \neq \emptyset$,
  then $p^*d^*=1$  and the gauge dual problem \eqref{NGD}  attains its optimum value.

\item If $\kappa$ is closed, and that  $\mathrm{ri} (\mathrm{dom}(\kappa)) \cap   \mathrm{ri}(X) \neq \emptyset$ and $\mathrm{ri}(\mathrm{dom} (\kappa^\circ)) \cap \mathrm{ri}(X') \neq \emptyset$, then $p^*d^*=1$  and both \eqref{NG} and \eqref{NGD}  attain their optimum values.
\end{enumerate}
\end{theorem}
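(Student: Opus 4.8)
The plan is to prove Theorem \ref{FMP-thm} by routing everything through Fenchel--Rockafellar duality, since the relative-interior hypotheses appearing in both parts are exactly the classical qualification conditions of that theory. First I would rewrite the primal gauge problem \eqref{NG} in unconstrained form as $p^* = \inf_x \{\kappa(x) + \delta_X(x)\}$, where $\delta_X$ is the indicator function of $X$. The two conjugates needed are standard consequences of the definition \eqref{polar-fun} and basic convex analysis \cite{Roc70}: for a gauge $\kappa$, the conjugate $\kappa^*$ is the indicator of the polar unit ball, $\kappa^* = \delta_{\{y\,:\,\kappa^\circ(y)\leq 1\}}$, while $\delta_X^* = \sigma_X$ is the support function of $X$. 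Assembling the Fenchel dual of the sum then yields
\[
F := \sup\nolimits_y \{ -\kappa^*(y) - \delta_X^*(-y)\} = \sup\nolimits_{\kappa^\circ(y)\leq 1}\ \inf\nolimits_{x\in X} \inner{x}{y}.
\]

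Next I would invoke the Fenchel duality theorem \cite{Roc70}: under the qualification $\mathrm{ri}(\mathrm{dom}\,\kappa)\cap \mathrm{ri}(X)\neq\emptyset$ --- the hypothesis shared by both parts --- strong Fenchel duality $p^*=F$ holds and the supremum defining $F$ is attained. The crux is then to identify $F$ with $1/d^*$. For this I would use positive homogeneity of $\kappa^\circ$: given any $y$ with $\kappa^\circ(y)\le 1$ and $\inf_{x\in X}\inner{x}{y}=v>0$, the rescaled point $y/v$ lies in $X'$ and satisfies $\kappa^\circ(y/v)\le 1/v$, so $d^*\le 1/v$; conversely, normalizing a near-optimal gauge-dual point $y^*$ by $\kappa^\circ(y^*)$ produces a point feasible for $F$ whose value is at least $1/\kappa^\circ(y^*)$, forcing $F\ge 1/d^*$. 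Hence $F=1/d^*$, so $p^*d^*=1$, and a maximizer $\bar y$ of $F$ rescales to $\bar y/F\in X'$ with $\kappa^\circ(\bar y/F)\le d^*$, i.e.\ a minimizer of \eqref{NGD}; this establishes dual attainment in part~1. A point I would make explicit is why $F>0$, since the normalization divides by $F$: the extra hypothesis $\mathrm{dom}(\kappa^\circ)\cap X'\neq\emptyset$ supplies a strongly feasible dual point $y_0$, and weak gauge duality (part~1 of Theorem \ref{Freund-thm}) forces $\kappa(x)\ge 1/\kappa^\circ(y_0)>0$ for every strongly feasible $x$, so that $p^*=F>0$ and the argument is not vacuous.

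For part~2 I would obtain attainment of the primal \eqref{NG} by invoking the symmetric form of the Fenchel theorem, in which the qualification is placed on the conjugate (dual) side: with $\kappa$ closed (so that $\kappa^{**}=\kappa$), a relative-interior condition on the domains of $\kappa^* = \delta_{\{\kappa^\circ\le 1\}}$ and $\delta_X^* = \sigma_X$ yields primal attainment together with strong duality. The remaining task is to verify that the stated hypothesis $\mathrm{ri}(\mathrm{dom}(\kappa^\circ))\cap\mathrm{ri}(X')\neq\emptyset$ implies this conjugate-side qualification, a translation that relates $\mathrm{dom}(\kappa^\circ)$ to the cone generated by the polar ball $\{\kappa^\circ\le 1\}$ and $X'$ to the barrier cone $\mathrm{dom}(\sigma_X)$ via positive homogeneity and the antipolar calculus of \cite{FMP14}.

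I anticipate the main obstacle to be precisely this matching of the relative-interior conditions on the conjugate domains for part~2, together with the degenerate boundary cases ($p^*\in\{0,\infty\}$ or $d^*\in\{0,\infty\}$) that the hypotheses are designed to exclude; confirming that the stated conditions are exactly strong enough to rule these out --- and that the scaling correspondence $F=1/d^*$ transfers attainment cleanly in each direction --- is where the genuine work lies, whereas the reformulation, conjugate computations, and weak-duality bookkeeping are routine.
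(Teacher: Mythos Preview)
The paper does not supply its own proof of Theorem~\ref{FMP-thm}; the result is quoted verbatim from \cite[Corollary~5.2, Corollary~5.4]{FMP14}, and the surrounding text only indicates that the proof in \cite{FMP14} proceeds by identifying the gauge dual \eqref{NGD} with the Fenchel dual of \eqref{NG} (cf.\ \cite[Theorem~5.1]{FMP14}). Your proposal follows precisely that route --- rewrite \eqref{NG} as $\inf\{\kappa+\delta_X\}$, compute the Fenchel dual using $\kappa^*=\delta_{\{\kappa^\circ\le 1\}}$ and $\delta_X^*=\sigma_X$, and then link the Fenchel dual value $F$ to $1/d^*$ by positive-homogeneous rescaling --- so the approach is the same one the paper attributes to \cite{FMP14}, and your outline is sound.

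One small caution for part~2: the conjugate-side qualification you need is a relative-interior intersection of $\mathrm{dom}(\kappa^*)=\{\kappa^\circ\le 1\}$ with $-\mathrm{dom}(\sigma_X)$, whereas your hypothesis is stated in terms of $\mathrm{dom}(\kappa^\circ)$ and $X'$. The translation goes through because $\mathrm{dom}(\kappa^\circ)$ is the cone generated by $\{\kappa^\circ\le 1\}$ and $X'$ sits inside (the negative of) the barrier cone, but you should be explicit that a point in $\mathrm{ri}(\mathrm{dom}(\kappa^\circ))\cap\mathrm{ri}(X')$ can be scaled into $\mathrm{ri}\{\kappa^\circ\le 1\}$ while remaining in $-\mathrm{ri}(\mathrm{dom}(\sigma_X))$; this is exactly the step you flag as the ``main obstacle,'' and it is where the argument is least mechanical.
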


Compared to the strong duality results given in \cite{Fre87}, here the ray-like condition on $X$ is no longer needed.
Besides theoretical duality results and the derivation of \eqref{Fried-GD}, the gauge dual of optimization problems with composition and conic side constraints were also discussed in \cite[Sec. 7]{FMP14}. In particular, the gauge dual problem of general SDP was derived in \cite[Sec. 7.2.1]{FMP14}.

\subsection{Gauge duality theory applied in low-rank optimization \cite{FM15}}
We now review briefly the recent application of gauge duality theory in low-rank optimization \cite{FM15}. Let ${\cal H}^n$ be the set of Hermitian matrices of order $n$. The model problem  studied in \cite{FM15} is
\begin{eqnarray}\label{Fried-trace}
  \min_{X\in{\cal H}^n} \{\langle I, X\rangle \ | \ \|{\cal A} X - b\| \leq \varepsilon, \ X \succeq 0\},
\end{eqnarray}
where  $I$ denotes the identity matrix of order $n$, $X\succeq 0$ means that $X$ is Hermitian and positive semidefinite, and the constraint
$\|{\cal A} X - b\| \leq \varepsilon$ enforces data fitting of the noisy linear measurements $b$.
By using the indicator function of the set $\{ X\in {\cal H}^n \ | \ X\succeq 0\}$,   \eqref{Fried-trace} can be easily reformulated as a gauge optimization problem, whose gauge dual problem is given by
\begin{eqnarray}\label{Fried-trace-gd}
  \min\nolimits_{y} \{\lambda_{\max}({\cal A}^\top y) \ | \ b^Ty - \varepsilon \|y\|_* \geq 1\}.
\end{eqnarray}
Here ${\cal A}^\top $ denotes the adjoint operator of $\cal A$, $\lambda_{\max}(\cdot)$ denotes the maximum eigenvalue, and $\|\cdot\|$ represents the dual norm of $\|\cdot\|_*$.
Strong duality result between \eqref{Fried-trace} and \eqref{Fried-trace-gd} was established and optimality conditions were derived in \cite{FM15}.
Furthermore, based on the established optimality conditions, a primal solution recovery procedure was validated. For the case of $\ell_2$-norm, the whole gauge dual based framework is  first to solve the gauge dual problem \eqref{Fried-trace-gd}  and then to recover a solution of \eqref{Fried-trace} via solving a reduced linear least squares problem. The projection onto the constraint set of \eqref{Fried-trace-gd} is easy, and the computation of a subgradient of the objective only requires computing an eigenvector corresponds to the largest eigenvalue of ${\cal A}^\top y$ for a given $y$. These are the main advantages of considering the gauge dual problem \eqref{Fried-trace-gd}. In contrast, the projection onto the set $\{y \ | \ \lambda_{\max}({\cal A}^\top y) \leq 1\}$, which appear in the Lagrange dual of \eqref{Fried-trace} as a constraint, can be very costly.
In \cite{FM15}, this gauge dual based algorithmic framework was applied to solve the phase retrieval problem and the blind deconvolution problem, both of which can be cast into \eqref{Fried-trace} by using the phase-lift reformulation introduced in \cite{CSV13}. A projected spectral subgradient algorithm with nonmonotonic line search was applied to solve \eqref{Fried-trace-gd} in \cite{FM15}. An important reason for the promising experimental results in \cite{FM15} is that in these applications the phase lifted problems admit rank-$1$ solutions, which makes the objective function of \eqref{Fried-trace-gd} essentially differentiable when the iterates are close to a true solution.
In \cite[Sec. 6]{FM15}, it was also discussed that the gauge dual framework can be applied to solve a generalization of \eqref{Fried-trace}, that is
\begin{eqnarray}\label{Fried-Cpd}
  \min_{X\in{\cal H}^n} \{\langle C, X\rangle \ | \ \|{\cal A} X - b\| \leq \varepsilon, \ X \succeq 0\},
\end{eqnarray}
where $C$ is required to be positive definite. In particular, the authors of \cite{FM15} reduced \eqref{Fried-Cpd} to \eqref{Fried-trace} by using $C^{-\frac{1}{2}}$. A primal solution recovery procedure thus follows from the result for \eqref{Fried-trace}. In Section \ref{sec:sdp}, we will remove the positive definite restriction on $C$.

\subsection{Our contributions}
Our main contributions are summarized below.  In Section \ref{sec:rpca}, we specialize this gauge dual framework to robust PCA. There the primal solution recovery procedure becomes a reduced $\ell_1$-norm minimization problem, rather than a least squares problem as in \cite{FM15}.
By setting $\varepsilon=0$,  \eqref{Fried-Cpd} reduces to a special class of SDP since $C$ must be positive definite. In Section \ref{sec:sdp}, we discuss the gauge dual framework for SDP without requiring $C$ to be positive definite. Particularly, we give a sufficient condition to guarantee strong gauge duality and derive a primal solution recovery procedure based on optimality conditions.
These results are extensions of \cite[Corollary 6.1, Corollary 6.2]{FM15}.

\section{Application to robust PCA}\label{sec:rpca}
Given a  matrix $M\in\Re^{m\times n}$. We consider the problem of decomposing $M$ into the sum of a low-rank component $X$ and a sparse component $Y$, which is known as the robust PCA problem \cite{CSPW11,CLMW11} and has many applications in image and signal processing and data analysis. After recovering $X$,
the principal components can then be obtained via applying a SVD  to $X$.
In the pioneering work \cite{CSPW11,CLMW11}, the authors proposed to recover the low-rank and the sparse components of $M$ via solving the following convex model
\begin{equation}\label{RPCA}
\min\nolimits_{X,Y} \{\|X\|_* + \gamma \|Y\|_1  \ | \ X + Y = M\}.
\end{equation}
Here $\|X\|_*$ denotes the nuclear norm (the sum of all singular values) of $X$,   $\|Y\|_1$ represents the sum of the absolute values of all entries of $Y$,
and $\gamma>0$ balances the two terms for minimization. Particularly,  $\|X\|_*$ and $\|Y\|_1$ serve as  surrogate convex functions of the rank of $X$ and the number of nonzero entries of $Y$, respectively. We refer to  \cite{CSPW11,CLMW11} for recoverability results of \eqref{RPCA}.
Throughout this section, we assume that (i) $M$ is nonzero, and (ii) the trivial decompositions $(M,0)$ and $(0,M)$ are not  optimal solutions  of \eqref{RPCA}.

There exist a few other convex formulations for the robust PCA problem. For example,
\begin{eqnarray}
\label{reform-1}
\min\nolimits_{X,Y}  \{ \|X + Y - M\|_F^2 \ | \  \|X\|_* \leq \tau_L,  \|Y\|_1  \leq \tau_S \}
\end{eqnarray}
and
\begin{eqnarray}
\label{reform-2}
\min\nolimits_{X,Y} \{  \gamma_L \|X\|_* + \gamma_S \|Y\|_1  +  \|X + Y - M\|_F^2\},   
\end{eqnarray}
where $\|\cdot\|_F$ denotes the Frobenius norm, and $\tau_L, \tau_S, \gamma_L, \gamma_S$ are positive parameters.
First order methods for solving these models, such as projected gradient method, proximal gradient method,
and alternating direction method of multipliers (ADMM), usually require either projections onto nuclear norm ball  or shrinkage/thresholding on the singular values of certain matrices, and thus full SVD is unavoidable. Indeed, the classical Frank-Wolfe method \cite{FW56}, also known as conditional gradient method \cite{LP66}, can be applied to solve appropriate convex formulations of robust PCA in which full SVD can be avoided, see, e.g., \cite{MZWG14}. However, Frank-Wolfe type methods usually suffer from slow sublinear convergence, and, moreover, the steplength parameter is hard to choose.
On the other hand, nonconvex optimization formulations are also useful to avoid full SVD. For example,  the authors of \cite{SWZ14} proposed to solve
the  following nonconvex optimization problem via  ADMM
\[\min\nolimits_{U,V,Y} \{\|Y\|_1 \ | \ UV^\top  + Y = M, U\in \Re^{m\times r}, V\in \Re^{n\times r}, Y \in \Re^{m\times n} \}.\]
Indeed, full SVD is replaced by   QR factorization of a small $m\times r$  matrix at each step. The potential drawback of this approach is that the rank $r$ is mostly unknown and could be difficult to estimate in practice.
In short, each approach has its own advantages and disadvantages, and the gauge dual approach to be studied is another candidate method   to avoid full SVD or full eigenvalue decomposition. In this paper, we only focus on the theory, while the  practical realizations of the established theory need to be investigated further. 

\subsection{The gauge dual problem of \eqref{RPCA}}
  Let $Z\in \Re^{m\times n}$. We denote the spectral norm and the max norm of $Z$ by  $\|Z\|_2$ and $\|Z\|_\infty$, respectively, i.e.,
\[
\|Z\|_2 = \sqrt{\lambda_{\max}(Z^\top Z)}  \text{~~and~~} \|Z\|_\infty = \max \{ |Z_{ij}| \ | \ 1\leq i \leq m, 1\leq j \leq n\},
\]
where $\lambda_{\max}(\cdot)$ denotes the  largest eigenvalue. It is easy to derive  from standard Lagrange duality theory that
the Lagrange dual problem of \eqref{RPCA} is
\begin{equation}\label{RPCA-LD}
  \max\nolimits_Z \{ \langle M, Z\rangle \ | \ \max \{ \|Z\|_2, \|Z\|_\infty/\gamma\} \leq 1\}.
\end{equation}
Although the projection onto each single constraint set of \eqref{RPCA-LD} (either $\|Z\|_2\leq 1$ or $\|Z\|_\infty \leq \gamma$) has a closed form representation,
that onto their intersection seems to be unreachable to the best of our knowledge.
We note that  the projection onto the  set $\{ Z \ | \ \|Z\|_2\leq 1\}$   needs a full SVD.
 Define
 \[
 {\cal A}(X,Y) := X + Y \text{~~and~~} \kappa(X,Y) := \|X\|_* + \gamma \|Y\|_1.
 \]
 Clearly, $\cal A$ is a linear operator and $\kappa$ is a closed gauge function.
It is thus apparent that \eqref{RPCA} can be rewritten as the following gauge optimization problem
 \begin{eqnarray}\label{RPCA-gauge-form}
   \min\nolimits_{X,Y} \{ \kappa(X,Y) \ | \ {\cal A}(X,Y) - M = 0 \}.
 \end{eqnarray}
Let $\cal A^\top$ be the adjoint operator of $\cal A$, then ${\cal A}^\top Z = (Z, Z)$.  According to \cite[Prop. 2.4]{FMP14}, it holds that
\begin{eqnarray*}
   \kappa^\circ(U, V) &=&  \max \{ \|U\|_2, \|V\|_\infty/\gamma\},  \\
  \kappa^\circ({\cal A}^\top Z) &=& \kappa^\circ(Z,Z) = \max \{ \|Z\|_2, \|Z\|_\infty/\gamma\}.
\end{eqnarray*}
Furthermore, it follows from \cite{FMP14} (or   \eqref{Fried-GD} with $\rho$ being any matrix norm and $\varepsilon = 0$) that the gauge dual  of \eqref{RPCA-gauge-form} is  $\min\nolimits_Z \{ \kappa^\circ({\cal A}^\top Z) \ | \  \langle M, Z\rangle \geq 1\}$, or equivalently,
 \begin{eqnarray}\label{RPCA-GD}
   \min\nolimits_Z \{ \max \{ \|Z\|_2, \|Z\|_\infty/\gamma\} \ | \ \langle M, Z\rangle \geq 1\}.
 \end{eqnarray}
%

\subsection{Strong duality and optimality conditions}
Next, we derive strong duality and optimality conditions for the primal-dual gauge pair \eqref{RPCA} and \eqref{RPCA-GD}.
For a real matrix $T$, we let $\sigma(T)$ denote the vector formed by stagnating the singular values of  $T$ in a nonincreasing order.
The set of orthonormal matrices of order $n$ is denoted by ${\cal U}_n$.
Our analysis relies on the well-known von Neumann's trace inequality and a basic lemma from convex analysis, which we present below.

\begin{theorem}
  [von Neumann]\label{thm:vonNeumann}
For any $m\times n$ real matrices $X$ and $Y$, it holds that
\[
\inner{X}{Y} \leq \inner{\sigma(X)}{\sigma(Y)}.
\]
Furthermore, equality holds if and only if there is a simultaneous SVD of $X$ and $Y$, i.e., for  some $U\in{\cal U}_m$ and $V\in{\cal U}_n$, there hold
$X = U \diag{\sigma(X)} V^\top $ and $Y = U \diag{\sigma(Y)} V^\top $.
\end{theorem}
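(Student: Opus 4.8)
The plan is to prove the inequality by reducing it, through the singular value decompositions of $X$ and $Y$, to a maximization of a bilinear form over a pair of orthogonal matrices, and then to exploit orthogonality to expose a doubly substochastic structure. Write $X = U_1\diag{\sigma(X)}V_1^\top$ and $Y = U_2\diag{\sigma(Y)}V_2^\top$ with $U_1,U_2\in{\cal U}_m$ and $V_1,V_2\in{\cal U}_n$, where $\diag{\cdot}$ here denotes the $m\times n$ rectangular diagonal matrix carrying the indicated singular values. Using $\inner{X}{Y} = \mathrm{tr}(X^\top Y)$ together with the cyclic invariance of the trace, I would rewrite
\[
\inner{X}{Y} = \mathrm{tr}\bigl(\diag{\sigma(X)}^\top P \diag{\sigma(Y)} Q\bigr),
\]
where $P = U_1^\top U_2\in{\cal U}_m$ and $Q = V_2^\top V_1\in{\cal U}_n$. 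Writing $a_i = \sigma_i(X)$, $b_j = \sigma_j(Y)$ and $r = \min(m,n)$, a direct expansion collapses the double diagonal structure and gives $\inner{X}{Y} = \sum_{i,j=1}^r a_i b_j\, S_{ij}$ with $S_{ij} = P_{ij}Q_{ji}$.

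Second, I would show that the $r\times r$ array $S$ is doubly substochastic in absolute value. By the Cauchy--Schwarz inequality and the orthonormality of the columns of $P$ and the rows of $Q$,
\[
\sum_{i=1}^r |S_{ij}| \le \Bigl(\sum_{i=1}^m P_{ij}^2\Bigr)^{1/2}\Bigl(\sum_{i=1}^n Q_{ji}^2\Bigr)^{1/2} \le 1,
\]
and symmetrically $\sum_{j=1}^r |S_{ij}| \le 1$. Since the singular values are nonnegative, $\inner{X}{Y} \le \sum_{i,j} a_i b_j |S_{ij}|$, and it remains to bound this over all nonnegative $r\times r$ arrays whose row and column sums are at most one. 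Because $a$ and $b$ are sorted nonincreasingly, a standard rearrangement argument (equivalently, an extreme-point argument over the polytope of doubly substochastic matrices, whose vertices are the subpermutation matrices) shows the maximum equals $\sum_{i=1}^r a_i b_i = \inner{\sigma(X)}{\sigma(Y)}$. This establishes the inequality.

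Third, for the equality characterization, the ``if'' direction is immediate: if $X = U\diag{\sigma(X)}V^\top$ and $Y = U\diag{\sigma(Y)}V^\top$ share the factors $U$ and $V$, then $\inner{X}{Y} = \mathrm{tr}(\diag{\sigma(X)}^\top\diag{\sigma(Y)}) = \inner{\sigma(X)}{\sigma(Y)}$. For the converse I would trace back through the chain: equality forces (i) $S_{ij}\ge 0$ on every pair with $a_i b_j > 0$, (ii) the Cauchy--Schwarz steps above to be tight, and (iii) the substochastic bound to be attained. Analyzing these conditions together shows that the support of $S$ must respect the blocks of equal singular values, from which one recovers, after suitably rotating the singular vectors within each such block, common orthogonal factors $U$ and $V$ realizing a simultaneous SVD.

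The main obstacle is precisely this converse. When the singular values are simple, the tightness conditions pin down $P$ and $Q$ almost entirely and the reconstruction is routine; the difficulty is the degenerate case of repeated singular values, where $P$ and $Q$ retain genuine rotational freedom within each block. The argument must show that this residual freedom can always be used consistently to align the two decompositions, and care is needed to guarantee that the blockwise rotations chosen for $X$ and for $Y$ are compatible. I would handle this by working block by block on the common refinement of the level sets of $\sigma(X)$ and $\sigma(Y)$, reducing to the scalar equality case within each block.
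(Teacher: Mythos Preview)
The paper does not prove this theorem; it is stated as the classical von Neumann trace inequality and used as a tool in the proofs of Theorems~\ref{thm:optimality-conditions} and~\ref{thm:primal-recovery}, with no argument supplied. There is therefore no ``paper's own proof'' to compare against.

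Your sketch follows the standard route (essentially Mirsky's argument): reduce via the two SVDs to a bilinear form in $P=U_1^\top U_2$ and $Q=V_2^\top V_1$, observe that the coefficient array $S_{ij}=P_{ij}Q_{ji}$ is doubly substochastic in absolute value by Cauchy--Schwarz and orthonormality, and then maximize over that polytope using the ordering of the singular values. The inequality part is fine as written; one minor point is that your Cauchy--Schwarz display jumps from a sum over $i\le r$ directly to sums over $i\le m$ and $i\le n$, whereas Cauchy--Schwarz first gives the partial sums $\sum_{i\le r}P_{ij}^2$ and $\sum_{i\le r}Q_{ji}^2$, which are then bounded by the full column/row norms---worth making explicit. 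For the equality case, you correctly identify the degenerate (repeated singular value) situation as the only nontrivial part, and your plan to work on the common refinement of the level sets of $\sigma(X)$ and $\sigma(Y)$ is the right one; just be aware that carrying this out cleanly requires some bookkeeping to show that the equality conditions in Cauchy--Schwarz force $P$ and $Q$ to be block-structured with $Q_{ji}=P_{ij}$ on the relevant support, which is what ultimately yields the common orthogonal factors.
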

\begin{lemma}\label{thm:dual-norm}
  Let $\|\cdot\|_\diamond$ be any norm and $\|\cdot\|_\star$ be its dual norm, i.e.,
$\|y\|_\star = \sup \{ \langle x, y\rangle \ | \ \|x\|_\diamond \leq 1\}$.
Then, for any $x$ and $y$, $x\in\partial \|y\|_\star$ if and only if  $\|x\|_\diamond \leq 1$  and
$\inner{x}{y} = \|y\|_\star$. Furthermore, if $y\neq 0$, then $\|x\|_\diamond = 1$.
\end{lemma}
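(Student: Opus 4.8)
The plan is to argue directly from the definition of the subdifferential, using only the generalized Cauchy--Schwarz inequality for a dual norm pair together with the biduality relation $(\|\cdot\|_\star)^* = \|\cdot\|_\diamond$, which holds in finite dimensions. The central inequality I would record first is
\[
\inner{x}{z} \leq \|x\|_\diamond \|z\|_\star \quad \text{for all } x, z,
\]
immediate from the defining supremum $\|z\|_\star = \sup\{\inner{u}{z} : \|u\|_\diamond \leq 1\}$ after normalizing $x$ by $\|x\|_\diamond$. By biduality I may also write $\|x\|_\diamond = \sup\{\inner{x}{z} : \|z\|_\star \leq 1\}$, a fact I will invoke to pass from inner-product bounds back to norm bounds.

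For the sufficiency direction, I would assume $\|x\|_\diamond \leq 1$ and $\inner{x}{y} = \|y\|_\star$, and verify the subgradient inequality $\|z\|_\star \geq \|y\|_\star + \inner{x}{z-y}$ for every $z$. Substituting $\inner{x}{y} = \|y\|_\star$ collapses the right-hand side to $\inner{x}{z}$, and the central inequality with $\|x\|_\diamond \leq 1$ yields $\inner{x}{z} \leq \|z\|_\star$, as required.

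For the necessity direction, I would assume $x \in \partial \|y\|_\star$, that is $\|z\|_\star \geq \|y\|_\star + \inner{x}{z-y}$ for all $z$. Testing $z = 0$ gives $\inner{x}{y} \geq \|y\|_\star$, while testing $z = 2y$ (and using positive homogeneity $\|2y\|_\star = 2\|y\|_\star$) gives the reverse inequality, so $\inner{x}{y} = \|y\|_\star$. Feeding this equality back into the subgradient inequality reduces it to $\inner{x}{z} \leq \|z\|_\star$ for all $z$; taking the supremum over $\|z\|_\star \leq 1$ together with biduality then forces $\|x\|_\diamond \leq 1$.

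Finally, for the furthermore claim, when $y \neq 0$ we have $\|y\|_\star > 0$; combining $\inner{x}{y} = \|y\|_\star$ with the central inequality $\inner{x}{y} \leq \|x\|_\diamond \|y\|_\star$ and dividing by $\|y\|_\star$ gives $\|x\|_\diamond \geq 1$, which with $\|x\|_\diamond \leq 1$ yields $\|x\|_\diamond = 1$. The only delicate point is the appeal to biduality in the necessity direction; this is standard in finite dimensions, and it could be bypassed altogether by identifying $\partial\|y\|_\star$ through the Fenchel--Young equality with the conjugate of $\|\cdot\|_\star$, namely the indicator function of the $\|\cdot\|_\diamond$-unit ball.
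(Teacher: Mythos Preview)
Your argument is correct in every step: the sufficiency direction, the two test vectors $z=0$ and $z=2y$ for necessity, the use of biduality to recover $\|x\|_\diamond\leq 1$, and the squeeze for the ``furthermore'' clause all go through cleanly. The paper itself does not supply a proof of this lemma; it is stated as a basic fact from convex analysis and used as a black box in the proof of Theorem~\ref{thm:primal-recovery}. Your write-up therefore fills a gap the authors deliberately left, and the alternative route you mention at the end---identifying $\partial\|\cdot\|_\star$ via Fenchel--Young and the fact that the conjugate of a norm is the indicator of the dual unit ball---is indeed the standard one-line shortcut and would also be acceptable here.
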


Denote the feasible set of \eqref{RPCA-gauge-form} by $\cal C$, i.e., ${\cal C} := \{(X,Y) \ | \ X + Y = M \}$. It is easy to show that the ray-like condition fails to hold for ${\cal C}$ and thus the results given in \cite{Fre87} cannot be used to establish strong duality. Instead, we resort to the results obtained in \cite{FMP14}, in which the ray-like condition was removed. Direct calculation  shows that the antipolar set
${\cal C}'$ of $\cal C$
is given by
$  {\cal C}' = \{(U,V) \ | \ U = V, \; \langle M, V\rangle \geq 1\}$.
Thus, the primal-dual gauge pair \eqref{RPCA-gauge-form} and \eqref{RPCA-GD} can also be rewritten, respectively,  as
\[
\min_{X,Y} \{\kappa(X,Y) \ | \ (X,Y) \in {\cal C}\} \text{~~and~~}
\min_{U,V} \{\kappa^\circ(U,V) \ | \ (U,V) \in {\cal C}'\}.
\]
Clearly, $\kappa$ is a closed gauge function and both $\kappa$ and $\kappa^\circ$ has full domain. It is then apparent that, as long as $M\neq 0$, the conditions of  \cite[Corollary 5.4]{FMP14} are satisfied since
$\mathrm{ri}({\cal C}) \cap   \mathrm{ri}(\mathrm{dom}(\kappa)) \neq \emptyset$ and $\mathrm{ri}({\cal C}') \cap   \mathrm{ri}(\mathrm{dom}(\kappa^\circ)) \neq \emptyset$, and consequently both  \eqref{RPCA-gauge-form} and \eqref{RPCA-GD} achieve their optimal values and strong duality holds.
These results are summarized in the following theorem, which follows directly from   \cite[Corollary 5.4]{FMP14}.

\begin{theorem}[Strong duality]\label{thm:strong-duality}
Assume that $M\neq 0$. Then, both   \eqref{RPCA-gauge-form} and \eqref{RPCA-GD} achieve their optimal values. Furthermore, strong duality holds, i.e.,
\[
\big[\min_{X,Y} \{\|X\|_* + \gamma \|Y\|_1, \  \ X + Y = M\} \big] \cdot
\big[ \min_Z \{ \max \{ \|Z\|_2, \|Z\|_\infty/\gamma\}, \  \ \langle M, Z\rangle \geq 1\} \big]
= 1.
\]
\end{theorem}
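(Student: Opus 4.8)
The plan is to recognize Theorem \ref{thm:strong-duality} as a direct instance of the strong-duality result Theorem \ref{FMP-thm}(2) (that is, \cite[Corollary 5.4]{FMP14}) applied to the primal-dual gauge pair written in the form $\min_{X,Y}\{\kappa(X,Y) \mid (X,Y) \in {\cal C}\}$ and $\min_{U,V}\{\kappa^\circ(U,V) \mid (U,V) \in {\cal C}'\}$. Since the feasible set ${\cal C}$ is not ray-like, Freund's results cannot be invoked, but the whole task reduces to verifying the three hypotheses of that corollary: that $\kappa$ is closed, that $\mathrm{ri}(\mathrm{dom}(\kappa)) \cap \mathrm{ri}({\cal C}) \neq \emptyset$, and that $\mathrm{ri}(\mathrm{dom}(\kappa^\circ)) \cap \mathrm{ri}({\cal C}') \neq \emptyset$. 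I would take as given the antipolar computation ${\cal C}' = \{(U,V) \mid U = V,\ \langle M, V\rangle \geq 1\}$ already established by the antipolar calculus of \cite{FMP14} and recorded just above the statement.

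The key observation I would exploit is that both gauge functions have full effective domain, so the two relative-interior intersection conditions collapse to mere nonemptiness of ${\cal C}$ and ${\cal C}'$. Indeed, $\kappa(X,Y) = \|X\|_* + \gamma\|Y\|_1$ is a sum of two finite-valued norms, hence a closed gauge that is finite everywhere; consequently $\mathrm{dom}(\kappa)$ equals the whole product space $\Re^{m\times n}\times\Re^{m\times n}$, and $\mathrm{ri}(\mathrm{dom}(\kappa))$ is this whole space as well. Likewise, the polar $\kappa^\circ(U,V) = \max\{\|U\|_2, \|V\|_\infty/\gamma\}$ computed above is a maximum of two finite norms and is therefore finite everywhere, so $\mathrm{dom}(\kappa^\circ)$ is again the full space. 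Because the relative interior of the whole space is the whole space, the required intersections reduce to $\mathrm{ri}({\cal C}) \neq \emptyset$ and $\mathrm{ri}({\cal C}') \neq \emptyset$, and for nonempty convex sets in finite dimension this is in turn equivalent to ${\cal C}\neq\emptyset$ and ${\cal C}'\neq\emptyset$.

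It then remains to check these two nonemptiness statements. The feasible set ${\cal C}$ always contains the trivial decomposition $(M,0)$ and is therefore nonempty. For ${\cal C}'$, this is exactly where the standing assumption $M\neq 0$ enters: taking $V = M/\|M\|_F^2$ gives $\langle M, V\rangle = 1 \geq 1$ together with $U = V$, so $(V,V)\in{\cal C}'$ and hence ${\cal C}'\neq\emptyset$. With all three hypotheses verified, Theorem \ref{FMP-thm}(2) yields that both \eqref{RPCA-gauge-form} and \eqref{RPCA-GD} attain their optimal values and that $p^*d^*=1$, which is precisely the displayed product formula once $\kappa$ and $\kappa^\circ$ are written out explicitly.

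I do not expect any genuine analytic difficulty here: the heavy lifting has been front-loaded into the general strong-duality theorem of \cite{FMP14} and into the antipolar computation of ${\cal C}'$. The one point that requires care—and the only place the hypothesis $M\neq 0$ is used—is establishing nonemptiness of ${\cal C}'$, since an empty antipolar set would force $d^*=+\infty$ and invalidate the product identity. I would therefore make the $M\neq 0$ argument explicit and simply cite \cite[Corollary 5.4]{FMP14} for the remaining machinery.
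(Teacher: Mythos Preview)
Your proposal is correct and follows essentially the same approach as the paper: the paper likewise observes that $\kappa$ is closed with full domain, that $\kappa^\circ$ has full domain, that the ray-like condition fails so Freund's theorem is unavailable, and that under $M\neq 0$ the two relative-interior intersection conditions of \cite[Corollary~5.4]{FMP14} are satisfied, from which the result follows directly. Your write-up is slightly more explicit in exhibiting witnesses for the nonemptiness of ${\cal C}$ and ${\cal C}'$, but the argument is identical.
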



%

We next establish optimality conditions for the primal-dual gauge pair \eqref{RPCA} and \eqref{RPCA-GD}, which are critical in designing a procedure to recover a primal optimal solution from a dual one.
\begin{theorem}[Optimality conditions]\label{thm:optimality-conditions}
Assume $M\neq 0$. If $(X,Y)$ and $Z$ satisfy
\begin{enumerate}
  \item $X+Y = M$,
  \item $\langle M, Z \rangle = 1$,
  \item $\|Y\|_1 \|Z\|_\infty = \langle Y, Z\rangle$, 
  \item $\sigma_i(X) (\|Z\|_2 - \sigma_i(Z)) = 0$,  $\forall \  i = 1,2,\ldots,\min(m,n)$,
  \item $X$ and $Z$ admit  a simultaneous ordered SVD,
  \item $\|Z\|_2 = \|Z\|_{\infty}/\gamma$,
\end{enumerate}
then, they are, respectively, optimal for the primal-dual gauge pair \eqref{RPCA} and \eqref{RPCA-GD}. Conversely, if $(X,Y)$ and $Z$
are, respectively, optimal for  \eqref{RPCA} and \eqref{RPCA-GD},  and  neither $X$ nor $Y$ is zero, then these conditions are also necessary.
\end{theorem}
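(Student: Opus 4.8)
The plan is to center the entire argument on the polar inequality
\[
\inner{X}{Z} + \inner{Y}{Z} = \inner{(X,Y)}{(Z,Z)} \le \kappa(X,Y)\,\kappa^\circ(Z,Z),
\]
which is available for any pair since both $\kappa$ and $\kappa^\circ$ have full domain, together with the strong duality equality $p^\ast d^\ast = 1$ from Theorem~\ref{thm:strong-duality}. Writing $\mu := \kappa^\circ(Z,Z) = \max\{\|Z\|_2,\,\|Z\|_\infty/\gamma\}$, I would first record the refinement of the right-hand side into a chain of elementary bounds: the von Neumann inequality $\inner{X}{Z} \le \inner{\sigma(X)}{\sigma(Z)}$ of Theorem~\ref{thm:vonNeumann}; the scalar rearrangement $\inner{\sigma(X)}{\sigma(Z)} \le \|X\|_\ast\|Z\|_2$ coming from $\sigma_i(Z)\le\sigma_1(Z)=\|Z\|_2$; the H\"older bound $\inner{Y}{Z}\le\|Y\|_1\|Z\|_\infty$; and finally $\|Z\|_2\le\mu$ and $\|Z\|_\infty\le\gamma\mu$. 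Summing these produces $\inner{X}{Z}+\inner{Y}{Z}\le\mu\,\kappa(X,Y)$, consistent with the polar inequality.

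For the sufficiency direction, I would check that conditions~1--6 force every bound in this chain to be an equality. Condition~5 gives equality in von Neumann; conditions~4 and~6 together collapse $\inner{\sigma(X)}{\sigma(Z)}$ to $\mu\|X\|_\ast$; condition~3 gives $\inner{Y}{Z}=\|Y\|_1\|Z\|_\infty$, which condition~6 rewrites as $\gamma\mu\|Y\|_1$. Hence $\inner{X}{Z}+\inner{Y}{Z}=\mu\,\kappa(X,Y)=\kappa^\circ(Z,Z)\,\kappa(X,Y)$, while conditions~1 and~2 make the left-hand side equal to $\inner{M}{Z}=1$. This yields $\kappa(X,Y)\kappa^\circ(Z,Z)=1$, and since both points are strongly feasible, part~4 of Theorem~\ref{Freund-thm} delivers optimality.

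For the necessity direction, I would run the chain in reverse. Strong duality and the polar inequality sandwich $1=\kappa(X,Y)\kappa^\circ(Z,Z)\ge\inner{M}{Z}\ge1$, which simultaneously forces condition~2 and tightness of the polar inequality. Splitting the resulting equality across the $X$- and $Y$-blocks, each elementary bound above must itself be tight. Equality in von Neumann is precisely the simultaneous ordered SVD (condition~5); termwise nonnegativity in $\sum_i\sigma_i(X)\big(\|Z\|_2-\sigma_i(Z)\big)=0$ forces condition~4; the H\"older equality is condition~3. The assumptions $X\neq0$ and $Y\neq0$ enter exactly here, since $\|X\|_\ast>0$ and $\|Y\|_1>0$ are what permit cancelling these factors from the tight bounds $\|X\|_\ast\|Z\|_2=\|X\|_\ast\mu$ and $\|Y\|_1\|Z\|_\infty=\|Y\|_1\gamma\mu$ to obtain $\|Z\|_2=\mu=\|Z\|_\infty/\gamma$, i.e.\ condition~6.

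The main obstacle I anticipate is not any single estimate but the careful bookkeeping of which inequality yields which condition; in particular, I must cleanly separate the genuinely matrix-valued von Neumann step (responsible for the simultaneous ordered SVD in condition~5) from the purely scalar rearrangement $\sigma_i(Z)\le\|Z\|_2$ (responsible for condition~4), since conflating them would lose information. A secondary subtlety is that $X\neq0$ and $Y\neq0$ are used solely to cancel $\|X\|_\ast$ and $\|Y\|_1$; without them condition~6 can genuinely fail, which is why the converse is stated only in that case.
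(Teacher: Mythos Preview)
Your proposal is correct and follows essentially the same approach as the paper: both arguments build the chain
\[
\kappa(X,Y)\,\kappa^\circ(Z,Z)\;\ge\;\|X\|_\ast\|Z\|_2+\|Y\|_1\|Z\|_\infty\;\ge\;\inner{\sigma(X)}{\sigma(Z)}+\inner{Y}{Z}\;\ge\;\inner{X}{Z}+\inner{Y}{Z}=\inner{M}{Z}\;\ge\;1,
\]
verify sufficiency by showing conditions~1--6 collapse every link to an equality, and obtain necessity by sandwiching with strong duality (Theorem~\ref{thm:strong-duality}) and reading off each condition from the corresponding tight inequality. Your explicit separation of the von Neumann step from the scalar bound $\sigma_i(Z)\le\|Z\|_2$, and your remark that $X\neq0$, $Y\neq0$ are needed precisely to cancel $\|X\|_\ast$ and $\|Y\|_1$ when deducing condition~6, match the paper's reasoning exactly.
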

\begin{proof}
Let $e$ be  the vector of all ones.
First, we assume that $(X,Y)$ and $Z$ satisfy conditions 1-6. We need to prove that $(X,Y)$ and $Z$  are primal-dual feasible and satisfy   strong duality. Primal and dual feasibility follows from conditions 1-2. Furthermore, it holds that
\begin{eqnarray*}
 & & (\|X\|_* + \gamma \|Y\|_1) \max \{ \|Z\|_2, \|Z\|_\infty/\gamma\} \\
    &=&  \langle  \sigma(X), e\rangle \|Z\|_2 +  \|Y\|_1   \|Z\|_\infty   \\
    &=&  \langle \sigma(X), \sigma(Z)\rangle  +  \langle Y, Z\rangle   \\
    &=&  \langle X, Z\rangle  +  \langle Y, Z\rangle   \\
    &=&  \langle M, Z\rangle  \\
    &=& 1.
\end{eqnarray*}
Here  the first ``$=$" follows from condition 6, the second ``$=$" follows from conditions 3-4, the third ``$=$" follows from condition 5 and  Theorem \ref{thm:vonNeumann}. 
As a result, strong duality holds.

Conversely, we assume that  $(X,Y)$ and $Z$
are optimal for the gauge dual pair \eqref{RPCA} and \eqref{RPCA-GD}  and  neither $X$ nor $Y$ is zero.
We need to prove that  conditions 1-6 are also necessary.
First, condition 1 is the  primal feasibility and thus is true.
%
By strong duality, we have
\begin{eqnarray*}
  1 &=& (\|X\|_* + \gamma \|Y\|_1) \max \{ \|Z\|_2, \|Z\|_\infty/\gamma\} \\
    &\geq&  \langle  \sigma(X), e\rangle \|Z\|_2 +  \|Y\|_1   \|Z\|_\infty   \\
    &\geq&  \langle \sigma(X), \sigma(Z)\rangle  +  \langle Y, Z\rangle   \\
    &\geq&  \langle X, Z\rangle  +  \langle Y, Z\rangle   \\
    &=&  \langle M, Z\rangle  \\
    &\geq& 1.
\end{eqnarray*}
Here the first ``$=$" is due to strong duality, the first ``$\geq$" is clearly true, the second ``$\geq$" is because $\sigma(X)\geq 0$ and $ \|Y\|_1   \|Z\|_\infty  \geq
 \langle Y, Z\rangle $, the third ``$\geq$"  follows from Theorem \ref{thm:vonNeumann}, the subsequent equality is because $(X,Y)$ is primal feasible, the last ``$\geq$" is because $Z$ is dual feasible.
Thus all of above inequalities become equalities. This proves conditions 2-4.
Condition 5 follows again from  Theorem \ref{thm:vonNeumann}, and finally condition 6 is   true because neither $X$ nor $Y$ is zero.
\end{proof}

\begin{remark}
Let $(X,Y)$ and $Z$
be, respectively, optimal for  \eqref{RPCA} and \eqref{RPCA-GD},  and  neither $X$ nor $Y$ is zero.
Define
${\cal I} = \{(i,j) \ | \ |Z_{ij}| = \|Z\|_\infty\}$ and ${\cal J} = \{(i,j) \ | \ (i,j)\notin {\cal I}, i=1,\ldots,m, j=1,\ldots,n\}$.
Then, the condition   $\|Y\|_1 \|Z\|_\infty = \langle Y, Z\rangle$ implies that (i)
$Y_{ij} = 0$ for all $(i,j) \in {\cal J}$, and (ii) either $Y_{ij} = 0$ or $\text{sgn}(Y_{ij}) = \text{sgn}(Z_{ij})$ for all $(i,j) \in {\cal I}$.
\end{remark}

It can be seen from the proof of Theorem \ref{thm:optimality-conditions} that conditions 1-5 are always necessary for $(X,Y)$ and $Z$ to be primal-dual optimal, and condition 6 is necessary only when neither $X$ nor $Y$ is zero. Given a dual optimal solution $Z$, in practice one can always check whether $(M,0)$ or $(0,M)$ is optimal by examining the strong duality condition. These are not practically interesting cases.

\subsection{Primal solution recovery}
The following theorem validates a way to recover a nontrivial primal optimal solution
$(X,Y)$ from an arbitrary  dual optimal solution $Z$.

\begin{theorem}[Primal solution recovery]\label{thm:primal-recovery}
Assume $M\neq 0$. Let $Z$ be an arbitrary optimal solution for \eqref{RPCA-GD}, $r \in\{1,2, \ldots, \min(m,n)\}$ be the multiplicity of $\|Z\|_2$, and $U\in \Re^{m\times r}$ and
$V\in \Re^{n\times r}$ be formed by the leading left and right singular vectors of $Z$. Then, $(X,Y) = (X,  M-X)$, neither is zero, is optimal   for \eqref{RPCA} if and only if
$X = U T V^\top $ for some  $T \succeq 0 $ and $ Z/(\gamma \|Z\|_2) \in \partial \| M - X\|_1$.
\end{theorem}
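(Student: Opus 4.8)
The plan is to reduce everything to the optimality conditions of Theorem~\ref{thm:optimality-conditions} and to translate its conditions 3, 4, 5 into the two stated requirements on $X$. The starting observation is that, since $Z$ is dual optimal, three of the six conditions come for free. Condition 1 holds by construction, since we seek $(X,Y)$ of the form $(X,M-X)$. Condition 2, $\inner{M}{Z}=1$, holds because the dual objective $\max\{\|Z\|_2,\|Z\|_\infty/\gamma\}$ is positively homogeneous and strictly positive on the feasible set: if $\inner{M}{Z}>1$ we could rescale $Z$ by $1/\inner{M}{Z}<1$ and strictly decrease the objective while staying feasible. Condition 6, $\|Z\|_2=\|Z\|_\infty/\gamma$, is a property of $Z$ alone; I would establish it once and for all by noting that, under the standing assumptions, the optimum of \eqref{RPCA} is attained (Theorem~\ref{thm:strong-duality}) by some $(X^*,Y^*)$ with both components nonzero (any optimal solution with a zero component would be one of the excluded trivial decompositions), and that this pair together with $Z$ satisfies $\kappa(X^*,Y^*)\kappa^\circ(Z)=p^*d^*=1$; the necessity part of Theorem~\ref{thm:optimality-conditions} then forces condition 6 for this, hence for every, dual optimal $Z$. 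With conditions 1, 2, 6 in hand, the theorem reduces to the claim that, for $(X,M-X)$ with neither component zero, condition 3 is equivalent to $Z/(\gamma\|Z\|_2)\in\partial\|M-X\|_1$ and conditions 4--5 together are equivalent to $X=UTV^\top$ for some $T\succeq 0$.

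The equivalence involving condition 3 is short. Using condition 6 we have $\gamma\|Z\|_2=\|Z\|_\infty$, so $W:=Z/(\gamma\|Z\|_2)=Z/\|Z\|_\infty$ satisfies $\|W\|_\infty=1$. Applying Lemma~\ref{thm:dual-norm} with $\|\cdot\|_\star=\|\cdot\|_1$ and dual norm $\|\cdot\|_\diamond=\|\cdot\|_\infty$, membership $W\in\partial\|M-X\|_1$ is equivalent to $\inner{W}{M-X}=\|M-X\|_1$ (the inequality $\|W\|_\infty\le 1$ being automatic), which upon multiplying by $\|Z\|_\infty$ is exactly condition 3 with $Y=M-X$.

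The crux, and the step I expect to be the main obstacle, is the equivalence of conditions 4--5 with $X=UTV^\top$, $T\succeq 0$, because of the rotational ambiguity of the leading singular vectors when $\|Z\|_2$ has multiplicity $r>1$. For the forward direction I would take a simultaneous ordered SVD $Z=\bar U\diag{\sigma(Z)}\bar V^\top$, $X=\bar U\diag{\sigma(X)}\bar V^\top$ from condition 5; condition 4 forces $\sigma_i(X)=0$ whenever $\sigma_i(Z)<\|Z\|_2$, i.e.\ for all $i>r$, so $X=\bar U_{:,1:r}\diag{\sigma_1(X),\dots,\sigma_r(X)}\bar V_{:,1:r}^\top$. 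The columns of $\bar U_{:,1:r}$ and of $U$ span the same leading left singular subspace, so $\bar U_{:,1:r}=UR_U$ and $\bar V_{:,1:r}=VR_V$ for orthogonal $R_U,R_V\in{\cal U}_r$. The key identity is $\bar U_{:,1:r}\bar V_{:,1:r}^\top=UV^\top$, which I would prove by computing the projection $UU^\top Z=\|Z\|_2 UV^\top$ (the off-subspace tail being annihilated) and likewise for $\bar U_{:,1:r}$, using that both equal the orthogonal projection onto the common subspace. This identity gives $R_UR_V^\top=I_r$, hence $R_U=R_V=:R$, and therefore $X=UR\diag{\sigma_1(X),\dots,\sigma_r(X)}R^\top V^\top=UTV^\top$ with $T=R\diag{\sigma_1(X),\dots,\sigma_r(X)}R^\top\succeq 0$. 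The converse reverses these steps: diagonalize $T=R\Lambda R^\top$ with $\Lambda\succeq 0$ in nonincreasing order, set $\tilde U=UR$ and $\tilde V=VR$, check these are leading singular vectors of $Z$, and extend them by the trailing singular vectors of $Z$ to full orthogonal matrices, producing a simultaneous ordered SVD of $X$ and $Z$ (condition 5) in which $\sigma_i(X)=0$ for $i>r$ (condition 4, via Theorem~\ref{thm:vonNeumann}).

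Finally I would assemble the two directions. For sufficiency, given $X=UTV^\top$ with $T\succeq 0$ and $Z/(\gamma\|Z\|_2)\in\partial\|M-X\|_1$ and neither $X$ nor $M-X$ zero, the translations above supply conditions 3--5, while conditions 1, 2, 6 hold automatically, so Theorem~\ref{thm:optimality-conditions} yields that $(X,M-X)$ and $Z$ are primal--dual optimal. For necessity, if $(X,M-X)$ with neither component zero is primal optimal then it is optimal together with the dual optimal $Z$, so Theorem~\ref{thm:optimality-conditions} gives conditions 3--5, which back-translate to the two stated requirements. This closes the equivalence.
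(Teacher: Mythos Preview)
Your proof is correct and follows essentially the same strategy as the paper's: both directions reduce to checking the six optimality conditions of Theorem~\ref{thm:optimality-conditions}, with Lemma~\ref{thm:dual-norm} handling condition~3 and the simultaneous-SVD structure handling conditions~4--5. You are more explicit than the paper about the rotational ambiguity in the leading singular vectors (the paper only remarks parenthetically that $U$ and $V$ must be rotated by the \emph{same} orthogonal $Q$, whereas you prove $R_U=R_V$ via $UU^\top Z=\|Z\|_2\,UV^\top$), and you establish condition~6 once for all dual optima using the standing non-triviality assumption, whereas the paper extracts it in the sufficiency direction directly from the subgradient hypothesis and $M-X\neq 0$ via Lemma~\ref{thm:dual-norm}; both organizations are valid.
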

\begin{proof}
The fact that $M\neq 0$ implies that the optimal value of  \eqref{RPCA}, denoted by $p^*$, is positive and finite.
By strong duality, the dual problem has optimal value $1/p^*$, which is also positive and finite.

%

%
Suppose that $X$ is constructed from an arbitrary optimal solution $Z$ of \eqref{RPCA-GD} such that
\begin{equation}\label{XZ-conditions}
  X = U T V^\top  \text{~~for some~~} T\succeq 0   \text{~ and ~} Z/(\gamma \|Z\|_2) \in \partial \| M - X\|_1.
\end{equation}
We next show that $(X,Y) = (X, M-X)$ and $Z$ satisfy the conditions 1-6 given in Theorem \ref{thm:optimality-conditions}, and thus $(X, M-X)$
must be an optimal solution for \eqref{RPCA}.
By definition, conditions 1, 4, 5 are satisfied (1 is trivial; the complementarity condition 4 is true because of the form $UTV^\top $ we have assigned to $X$; Condition 5 is due to the symmetry and positive semidefiniteness  of $T$. Note that, the linear spaces identified by $U$ and $V$ (or by $\sigma_{\max}(Z)$) are unique. However, in the SVD representation of $Z$, $U$ can be replaced by $UQ$, where $Q$ is any orthonormal matrix of  size $r\times r$, provided that  $V$ is replaced by $VQ$ simultaneously).
Condition 2 also holds because the objective of \eqref{RPCA-GD} is positively homogeneous and the dual optimal value, $1/p^*$, is positive and finite.
Furthermore, by letting $\|\cdot\|_\diamond = \|\cdot\|_\infty$ and  $\|\cdot\|_* = \|\cdot\|_1$ in Lemma \ref{thm:dual-norm}, and note that by our assumption $M - X\neq 0$, the condition
$Z/(\gamma \|Z\|_2) \in \partial \|M-X\|_1$ is equivalent to
\[
\left\langle \frac{Z}{\gamma \|Z\|_2}, M-X\right\rangle = \|M-X\|_1  \text{~~and~~} \left\|\frac{Z}{\gamma \|Z\|_2}\right\|_{\infty} = 1,
\]
which implies conditions 3 and 6. 

%
Conversely, suppose that $(X,Y) = (X, M - X)$, neither is zero,  is an optimal solution for \eqref{RPCA}, we need to show that $X$ and $Z$ are linked by
\eqref{XZ-conditions}.
In this case, we have conditions 1-6 of Theorem \ref{thm:optimality-conditions} on hand. Condition 4 implies that any left (resp.  right) singular vector of $Z$ associated to $\sigma_i(Z)$ with $i > r$ must be in the null space of $X^\top $ (resp. $X$). Therefore, $X$ has the form $ X = U T V^\top  $ for  a certain real matrix $T$ of  size $r\times r$.
Condition 5, which says that $X$ and $Z$ must share a simultaneous ordered SVD, ensures that  $T$ is also symmetric and positive semidefinite.
On the other hand,  condition  6 implies that $\|Z/(\gamma \|Z\|_2)\|_{\infty} = 1$. As a result, by further considering Lemma \ref{thm:dual-norm}, $Z/(\gamma \|Z\|_2) \in \partial \| M - X\|_1$ follows from condition 6. This completes the proof.
\end{proof}

\subsection{The whole gauge dual approach for robust PCA}
According to Theorem \ref{thm:primal-recovery}, a nontrivial primal optimal solution $(X,Y)$ for \eqref{RPCA} can be recovered from a given dual optimal solution $Z$ for \eqref{RPCA-GD} via solving the following $\ell_1$-minimization problem
\begin{eqnarray}\label{T-sub}
T^* \leftarrow \arg\min_{T \succeq 0} \left\langle  -Z/ \|Z\|_2, M-U T V^\top  \right\rangle + \gamma \| M - U T V^\top \|_1.
\end{eqnarray}
Note that the objective function in \eqref{T-sub} is guaranteed to be bounded below because
\begin{eqnarray*}
 \inf_{T\succeq 0} \left\langle  -Z/ \|Z\|_2, M-U T V^\top  \right\rangle + \gamma \| M - U T V^\top \|_1
 \geq  \inf_{Y\in \Re^{m\times n}} \left\langle -Z/ \|Z\|_2, Y \right\rangle + \gamma \|Y\|_1
 \geq  0.
\end{eqnarray*}
Here the second ``$\geq$" follows from the assumption that $\|Z/ (\gamma \|Z\|_2) \|_\infty = 1$, because  there is no nontrivial primal optimal solution otherwise.
The problem \eqref{T-sub} can be solved, e.g., by the classical ADMM,  which is well known to a broad audience and has a very fruitful literature, see, e.g., \cite{SWZ14}.
Here, we only present the iterative formulas but omit the details. Define $\hat Z := -Z/ \|Z\|_2$. To solve \eqref{T-sub} by ADMM, one iterates for $k\geq 0$ from a starting point $(T^0, W^0)$ as follows:
   \begin{subequations}
\begin{eqnarray}
\label{T-Y}
   Y^{k+1} &=& \arg\min_Y   \gamma \| Y\|_1  + \frac{\beta}{2}  \|Y + U T^k V^\top  - M - W^k/\beta + \hat Z/\beta\|_F^2, \\
\label{T-T} T^{k+1} &=&  \text{Proj}_{\succeq 0} \left(U^\top (M  +  W^k /\beta -   Y^{k+1})V\right), \\
\label{T-W} W^{k+1} &=& W^k - \beta (Y^{k+1} + U T^{k+1} V^\top  - M).
\end{eqnarray}
\end{subequations}
Here $\beta>0$ is a penalty parameter, $W^k\in\Re^{m\times n}$ is the Lagrange multiplier, and $\text{Proj}_{\succeq 0}(\cdot)$ denotes the Euclidean projection onto the symmetric positive semidefinite cone. Note that both \eqref{T-Y} and \eqref{T-T} have closed form solutions, and \eqref{T-T}  only involves  eigenvalue decomposition of a small matrix.

Now, we summarize below the steps for solving the robust PCA problem \eqref{RPCA} via gauge duality.
\begin{enumerate}
  \item Solve the gauge dual problem \eqref{RPCA-GD} to obtain an optimal solution $Z$;
  \item Compute the leading  singular vectors of $Z$:  $U\in\Re^{m\times r}$  and $V\in\Re^{n\times r}$;
  \item Obtain $T^*$ from solving \eqref{T-sub};
  \item Set $X = U T^* V^\top $ and $Y = M - X$.
\end{enumerate}

In principle, the gauge dual problem \eqref{RPCA-GD} can be solved via any numerical  algorithms using only subgradients, e.g., projected subgradient method and
bundle type methods \cite{Sho85,LNN95,Kiw95,BN05}. Indeed, according to basic subgradient calculus \cite{Roc70}, the subdifferential of $ \max \{ \|Z\|_2, \|Z\|_\infty/\gamma\}$ can be characterized easily. See also, e.g., \cite{Lew95}.
%
In particular, the subdifferential of $\|Z\|_2$ is given by
\[
\partial \|Z\|_2 = \{  U \diag{\mu}V^\top  \ | \ \mu \in \partial\|\sigma(Z)\|_\infty, Z = U \diag{\sigma(Z)}V^\top , U\in{\cal U}_m, V\in{\cal U}_n\},
\]
and that of $\|Z\|_\infty$ is given by
$\partial \|Z\|_\infty = \text{conv}\{ \text{sgn}(Z_{ij})E_{ij}   \ | \ |Z_{ij}| = \|Z\|_{\infty} \}$, where ``conv" denotes the convex hull operation.
As a result, the subdifferential of $ \max \{ \|Z\|_2, \|Z\|_\infty/\gamma\}$ is given by
\[
\partial \max \{ \|Z\|_2, \|Z\|_\infty/\gamma\} =
\left\{
  \begin{array}{ll}
     \partial \|Z\|_2, & \hbox{if $\|Z\|_2 > \|Z\|_\infty/\gamma$;} \\
     \text{conv}\{\partial \|Z\|_2 \cup \partial \|Z\|_\infty/\gamma\}, & \hbox{if $\|Z\|_2 = \|Z\|_\infty/\gamma$;} \\
     \partial \|Z\|_\infty/\gamma, & \hbox{if otherwise.}
  \end{array}
\right.
\]
It is worth noting that in practice the computation of one subgradient at any given $Z$ is very easy,
because in this case only one pair of left and right singular vectors corresponding to the largest
singular value needs to be computed. This fact is actually the  key to avoid full SVD.
On the other hand, it is apparent that the projection of any given $X$ onto $\{Z \ | \ \langle M, Z\rangle \geq 1\}$ is very easy and is given by
\begin{equation*}\label{Proj}
{\cal P}(X) =
\left\{
  \begin{array}{ll}
    X, & \hbox{if $\langle M, X \rangle \geq 1$;} \\
    X + \frac{1 - \langle M, X \rangle}{\langle M, M \rangle} M, & \hbox{if otherwise.}
  \end{array}
\right.
\end{equation*}
%

\section{Application to SDP}\label{sec:sdp}
In this section, we consider the SDP problem in standard form and derive a gauge duality approach. Let  $S^n$ be
 the set of real symmetric matrices of order $n$.
Given $C\in S^n$, $b\in\Re^m$, and a linear operator ${\cal A}:  S^n \rightarrow \Re^m$,
we consider the SDP problem in standard form
\begin{eqnarray}\label{SDP-Standard}
 \min\nolimits_X  \{ \inner{C}{X} \ | \ {\cal A} X = b, \ X\succeq 0\}.
\end{eqnarray}
Most algorithms, e.g., interior point methods and ADMM \cite{WGY10,XuYY11}, for solving \eqref{SDP-Standard} require full eigenvalue decomposition at each iteration.
In a recent work \cite{Ren14},  under certain assumption that does not loss generality,
Renegar reformulated \eqref{SDP-Standard} as
\begin{equation}\label{Renegar-SDP}
\max\nolimits_X \{ \lambda_{\min}(X) \ | \ {\cal A}X = b, \langle C, X\rangle = z\},
\end{equation}
where $\lambda_{\min}(\cdot)$ denotes the smallest eigenvalue, and
$z$ is any value strictly less than the optimal value of \eqref{SDP-Standard}.
After solving the nonsmooth problem \eqref{Renegar-SDP}, a solution of \eqref{SDP-Standard} can be obtained directly via a simple linear transform.
Indeed,  full eigenvalue decomposition can be avoided if one solves \eqref{Renegar-SDP} by subgradient type methods, e.g., the projected subgradient method.
We note that  the projection on the feasible set of \eqref{Renegar-SDP} could be costly.

The Lagrange dual problem of \eqref{SDP-Standard} is
\begin{eqnarray}\label{SDP-LD}
 \max\nolimits_{y}  \{ b^\top y  \ | \ C - {\cal A}^\top  y  \succeq 0\}.
\end{eqnarray}
Throughout this section, we make the following  blanket  assumption on the primal and dual Lagrange pair \eqref{SDP-Standard} and \eqref{SDP-LD}, which will not be mentioned further.
\begin{assumption}\label{assumption-sdp}
Assume that both \eqref{SDP-Standard} and \eqref{SDP-LD} have optimal solutions. Furthermore,   assume, without loss of generality, that $C\succeq 0$, because, otherwise, one can replace $C$ by $C - {\cal A}^\top  \hat y$ for some $\hat y$ that is  feasible for \eqref{SDP-LD}. 
\end{assumption}

\subsection{The gauge dual problem}
Under Assumption \ref{assumption-sdp}, the objective  value of \eqref{SDP-Standard} at any $X\succeq 0$ is nonnegative.
Let $\delta(X \ | \ \cdot \succeq 0)$ be the indicator function of the set $\{X \in S^n \ | \ X\succeq 0\}$, i.e., $\delta(X \ | \ \cdot \succeq 0)$ equals $0$ if $X\succeq 0$ and $+\infty$ if otherwise.
Define $\kappa(X) := \inner{C}{X} + \delta(X \ | \ \cdot \succeq 0)$, which is clearly  a gauge function. Then, the primal SDP \eqref{SDP-Standard} is equivalently transformed to the following gauge optimization problem
\begin{eqnarray}\label{SDP-GP}
 \min\nolimits_X  \{ \kappa(X) \ | \ {\cal A} X = b \}.
\end{eqnarray}
This kind of gauge optimization formulation has been considered first in \cite{Fre87} for linear programming and then in \cite{FMP14} for general conic programming.
The   gauge dual problem of \eqref{SDP-GP}  follows directly from \cite[Sec. 7.2]{FMP14} and is summarized below.
\begin{proposition}
The gauge dual problem of \eqref{SDP-GP} is given by
\begin{eqnarray}\label{SDP-GD}
 \min\nolimits_y  \{ \kappa^\circ ({\cal A}^\top  y) \ | \ b^\top y \geq 1\},
\end{eqnarray}
where $\kappa^\circ (Z) = \inf_{\mu}  \{\mu \geq 0 \ | \  \mu {C} - Z  \succeq 0   \} $, $Z\in S^n$.
\end{proposition}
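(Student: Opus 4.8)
The plan is to derive \eqref{SDP-GD} in two independent steps: first compute the polar function $\kappa^\circ$ of the objective gauge $\kappa(X) = \langle C, X\rangle + \delta(X \mid \cdot \succeq 0)$, and then compute the antipolar set of the affine constraint $\{X \mid {\cal A}X = b\}$, reparametrizing it so that the abstract gauge dual \eqref{NGD} collapses to the stated finite-dimensional problem in $y\in\Re^m$.

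For the polar function I would work directly from the definition \eqref{polar-fun}. Since $\kappa(X) = +\infty$ whenever $X \not\succeq 0$, the inequality $\langle X, Z\rangle \leq \mu\kappa(X)$ is vacuous off the positive semidefinite cone for any $\mu > 0$, so only the matrices $X \succeq 0$, on which $\kappa(X) = \langle C, X\rangle$, constrain $\mu$. The defining condition thus reduces to $\langle X, \mu C - Z\rangle \geq 0$ for all $X \succeq 0$, which by the self-duality of the positive semidefinite cone is equivalent to $\mu C - Z \succeq 0$. Hence $\kappa^\circ(Z) = \inf\{\mu \geq 0 \mid \mu C - Z \succeq 0\}$ as claimed; the passage between $\mu > 0$ and $\mu \geq 0$ is harmless because $C \succeq 0$ under Assumption \ref{assumption-sdp}, so the set of admissible $\mu$ is a closed half-line. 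This step is where the hypothesis $C\succeq 0$ is genuinely used and is the only place requiring a real convex-analytic fact, namely cone self-duality.

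For the antipolar set, write the constraint set as ${\cal C} = \{X \mid {\cal A}X = b\}$ and fix any $X_0$ with ${\cal A}X_0 = b$, so that ${\cal C} = X_0 + \ker{\cal A}$. By definition ${\cal C}' = \{Z \mid \langle X, Z\rangle \geq 1 \text{ for all } X \in {\cal C}\}$; for $\langle X_0 + N, Z\rangle$ to stay bounded below over all $N \in \ker{\cal A}$ one needs $\langle N, Z\rangle = 0$ for every $N \in \ker{\cal A}$, i.e. $Z \in (\ker{\cal A})^\perp = \mathrm{range}({\cal A}^\top)$. Writing $Z = {\cal A}^\top y$ and using $\langle X_0, {\cal A}^\top y\rangle = \langle {\cal A}X_0, y\rangle = b^\top y$, the surviving requirement $\langle X, Z\rangle \geq 1$ becomes exactly $b^\top y \geq 1$. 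Thus ${\cal C}' = \{{\cal A}^\top y \mid b^\top y \geq 1\}$, and substituting this description into \eqref{NGD} gives $\min_y\{\kappa^\circ({\cal A}^\top y) \mid b^\top y \geq 1\}$, which is \eqref{SDP-GD}.

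Equivalently, one may bypass the explicit antipolar computation by invoking \eqref{Fried-GD} with $\rho$ any norm and $\varepsilon = 0$, since ${\cal A}X = b$ is precisely the constraint $\rho({\cal A}X - b) \leq 0$; this is the route of \cite[Sec. 7.2]{FMP14}. I expect no serious obstacle in this argument: the two points demanding care are the boundedness argument forcing $Z \in \mathrm{range}({\cal A}^\top)$ (without which ${\cal C}'$ would be empty), and the self-duality step in the polar-function computation.
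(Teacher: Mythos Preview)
Your proposal is correct. The paper itself gives no proof of this proposition at all---it simply states that the result ``follows directly from \cite[Sec.~7.2]{FMP14}''---so your direct computation of $\kappa^\circ$ via cone self-duality and of the antipolar set ${\cal C}' = \{{\cal A}^\top y \mid b^\top y \geq 1\}$ via the decomposition ${\cal C} = X_0 + \ker{\cal A}$ supplies exactly the details the paper defers to that reference; you also note the \eqref{Fried-GD} shortcut, which is the route the paper invokes.
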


In the following, we use the notation $\mu_y := \kappa^\circ({\cal A}^\top y)$  for   $y\in\Re^m$.

\subsection{Optimality conditions}
To derive necessary and sufficient conditions for $X^*$ and $y^*$ to be primal and dual optimal, respectively,
we need   strong duality  between   \eqref{SDP-GP} and \eqref{SDP-GD}, that is
   \begin{eqnarray}
     \label{StrongDuality-SDP}
  [ \min_X  \{ \kappa(X) \ | \ {\cal A} X = b \} ] \cdot
  [ \min_y  \{ \kappa^\circ ({\cal A}^\top  y) \ | \ b^\top y \geq 1\} ] = 1.
   \end{eqnarray}
We note that  the strong duality condition  \eqref{StrongDuality-SDP}  may fail to hold in general. The following theorem gives  a sufficient condition to ensure its validity.
\begin{theorem}
  [Strong duality]
  Assume that $C\succ 0$, $b\neq 0$, and there exists $X\succ 0$ such that ${\cal A}X = b$.
  Then, both  \eqref{SDP-GP} and \eqref{SDP-GD} achieve their respective optimum value and   strong duality condition \eqref{StrongDuality-SDP} holds.
\end{theorem}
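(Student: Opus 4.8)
The plan is to recognize that the asserted strong duality is a direct instance of the general result quoted in part 2 of Theorem \ref{FMP-thm} (that is, \cite[Corollary 5.4]{FMP14}), applied to the gauge pair \eqref{SDP-GP}--\eqref{SDP-GD}, so that the entire task reduces to checking its three hypotheses. In the abstract notation of \eqref{NG}--\eqref{NGD}, the primal constraint set is the affine subspace $\mathcal{C} := \{X \in S^n \mid \mathcal{A}X = b\}$ and the gauge is $\kappa(X) = \inner{C}{X} + \delta(X \mid \cdot \succeq 0)$. First I would note that $\kappa$ is a closed gauge function, as already observed in the paper: it is the sum of a linear function and the indicator of the closed convex cone $\{X \succeq 0\}$, hence nonnegative (using $C \succeq 0$), convex, positively homogeneous, zero at the origin, and lower semicontinuous. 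This discharges the first hypothesis.

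For the second hypothesis, $\mathrm{ri}(\mathrm{dom}(\kappa)) \cap \mathrm{ri}(\mathcal{C}) \neq \emptyset$, I would observe that $\mathrm{dom}(\kappa)$ is the positive semidefinite cone, so $\mathrm{ri}(\mathrm{dom}(\kappa)) = \{X \mid X \succ 0\}$, while $\mathcal{C}$ is affine and hence $\mathrm{ri}(\mathcal{C}) = \mathcal{C}$. The intersection is therefore exactly the set of $X \succ 0$ with $\mathcal{A}X = b$, which is nonempty precisely by the Slater-type assumption of the theorem.

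The third hypothesis, $\mathrm{ri}(\mathrm{dom}(\kappa^\circ)) \cap \mathrm{ri}(\mathcal{C}') \neq \emptyset$, is where the assumptions $C \succ 0$ and $b \neq 0$ enter, and I expect this to be the main point requiring care. Two computations are needed. First I would identify the antipolar set: writing $\mathcal{C} = X_0 + \ker(\mathcal{A})$ for any fixed $X_0$ with $\mathcal{A}X_0 = b$, the requirement $\langle X, Z\rangle \geq 1$ for all $X \in \mathcal{C}$ forces $\langle N, Z\rangle = 0$ for every $N \in \ker(\mathcal{A})$ (else the inner product is unbounded below), whence $Z \in \mathrm{range}(\mathcal{A}^\top)$ and $\langle X_0, Z\rangle \geq 1$; setting $Z = \mathcal{A}^\top y$ and using $\langle X_0, \mathcal{A}^\top y\rangle = b^\top y$ gives $\mathcal{C}' = \{\mathcal{A}^\top y \mid b^\top y \geq 1\}$, which is nonempty exactly because $b \neq 0$. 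Second, and this is the crux, I would show that $C \succ 0$ forces $\mathrm{dom}(\kappa^\circ) = S^n$: combining the formula $\kappa^\circ(Z) = \inf\{\mu \geq 0 \mid \mu C - Z \succeq 0\}$ from the proposition with the congruence $\mu C - Z \succeq 0 \iff \mu I \succeq C^{-1/2} Z C^{-1/2}$, one finds $\kappa^\circ(Z) = \max\{0,\, \lambda_{\max}(C^{-1/2} Z C^{-1/2})\}$, which is finite for every $Z \in S^n$. It is precisely the strictness (invertibility) of $C \succ 0$ that secures this finiteness; under $C \succeq 0$ alone, $\kappa^\circ$ can equal $+\infty$ on a nontrivial set, which is exactly why the hypothesis cannot be weakened here. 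Consequently $\mathrm{ri}(\mathrm{dom}(\kappa^\circ)) = S^n \supseteq \mathrm{ri}(\mathcal{C}')$, and the third intersection is nonempty.

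With all three hypotheses verified, I would conclude by invoking Theorem \ref{FMP-thm}, part 2, which delivers $p^* d^* = 1$ together with attainment of the optimum in both \eqref{SDP-GP} and \eqref{SDP-GD}, that is, exactly the strong duality condition \eqref{StrongDuality-SDP} and the stated attainment. The substantive obstacle is thus not the duality machinery, which is imported wholesale, but the two preparatory identifications: the explicit description of $\mathcal{C}'$ and, above all, the proof that $C \succ 0$ makes $\kappa^\circ$ finite on all of $S^n$.
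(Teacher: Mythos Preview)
Your proposal is correct and follows essentially the same route as the paper: both proofs verify the hypotheses of \cite[Corollary 5.4]{FMP14} (Theorem~\ref{FMP-thm}, part 2) by checking that $\kappa$ is closed, that the primal Slater condition gives $\mathrm{ri}(\mathrm{dom}(\kappa))\cap\mathrm{ri}(\mathcal{C})\neq\emptyset$, and that $C\succ 0$ together with $b\neq 0$ yields $\mathrm{ri}(\mathrm{dom}(\kappa^\circ))\cap\mathrm{ri}(\mathcal{C}')\neq\emptyset$. Your version is slightly more explicit---you spell out the derivation of $\mathcal{C}'$ and give the closed-form expression $\kappa^\circ(Z)=\max\{0,\lambda_{\max}(C^{-1/2}ZC^{-1/2})\}$ to justify full domain---whereas the paper simply asserts these facts, but the argument is the same.
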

 \begin{proof}
 Let ${\cal C} := \{X \ | \ {\cal A} X = b\}$. It is easy to show that ${\cal C}' := \{{\cal A}^\top y \ | \ b^\top y \geq 1\}$.
   We prove the theorem by showing that the conditions of \cite[Corollary 5.4]{FMP14} are satisfied. First, it is clear that
   \[
   \mathrm{ri}({\cal C}) \cap   \mathrm{ri}(\mathrm{dom}(\kappa)) = {\cal C}  \cap  \{X \ | \ X \succ 0\},
   \]
   which is nonempty by our assumption.  Second, the assumptions that $C \succ 0$ and $b\neq 0$ imply, respectively, that $\kappa^\circ $ has full domain and ${\cal C}'$ is nonempty.
   Furthermore, ${\cal C}'$ is clearly convex. Since a nonempty convex set always has nonempty relative interior, it is then
   clear that the intersection $\mathrm{ri}({\cal C}') \cap   \mathrm{ri}(\mathrm{dom}(\kappa^\circ))$ is also nonempty.
   It is also apparent that $\kappa$ is a closed gauge function.    As a result, the conclusion of this theorem follows directly from
   \cite[Corollary 5.4]{FMP14}.
 \end{proof}
\begin{theorem}
  [Optimality conditions]\label{thm:opt-sdp}
  Assume that  \eqref{SDP-GD} has an optimal solution,   $ \min_X  \{ \kappa(X) \ | \ {\cal A} X = b \} > 0$, and strong duality holds
  Then,
  $X^*\in S^n$ and $y^*\in \Re^m$ are, respectively, optimal for  \eqref{SDP-GP} and \eqref{SDP-GD} if and only if
  ${\cal A}X^* = b$, $X^*\succeq 0$, $b^\top y^* = 1$, and $\langle \mu_{y^*} C - {\cal A}^\top  y^*, X^*\rangle = 0$, where $\mu_{y^*} := \kappa^\circ ({\cal A}^\top y^*)$.
\end{theorem}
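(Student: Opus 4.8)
The plan is to reduce the four stated conditions to a single scalar identity, namely $\kappa(X^*)\,\kappa^\circ({\cal A}^\top y^*)=1$ together with primal and dual feasibility, and then to invoke the gauge-duality machinery already recorded in Theorem \ref{Freund-thm}. First I would collect three elementary observations. For a primal point with $X^*\succeq 0$ one has $\kappa(X^*)=\langle C,X^*\rangle$; by the definition in the proposition, $\kappa^\circ({\cal A}^\top y^*)=\mu_{y^*}$; and the infimum defining $\mu_{y^*}$ is \emph{attained}, so that $\mu_{y^*}C-{\cal A}^\top y^*\succeq 0$. The attainment holds because, with $C\succeq 0$, the set $\{\mu\ge 0 : \mu C-{\cal A}^\top y^*\succeq 0\}$ is a closed upper ray (if $\mu_1 C-{\cal A}^\top y^*\succeq 0$ and $\mu_2>\mu_1$ then $\mu_2 C-{\cal A}^\top y^*=(\mu_1 C-{\cal A}^\top y^*)+(\mu_2-\mu_1)C\succeq 0$).

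The heart of the argument is a weak-duality chain valid whenever $X^*$ is primal feasible (${\cal A}X^*=b$, $X^*\succeq 0$) and $y^*$ is dual feasible ($b^\top y^*\ge 1$):
\[
\kappa(X^*)\,\kappa^\circ({\cal A}^\top y^*)=\langle \mu_{y^*}C,\,X^*\rangle \ \ge\ \langle {\cal A}^\top y^*,\,X^*\rangle = \langle y^*,\,{\cal A}X^*\rangle = b^\top y^* \ \ge\ 1 ,
\]
where the first inequality uses that the trace inner product of the two positive semidefinite matrices $\mu_{y^*}C-{\cal A}^\top y^*$ and $X^*$ is nonnegative, the middle equalities are the adjoint relation together with ${\cal A}X^*=b$, and the last inequality is dual feasibility. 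This is exactly the weak gauge duality of Theorem \ref{Freund-thm}, part 1, made explicit for this pair.

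For the necessity direction I would invoke the standing strong-duality hypothesis to set the left-hand side equal to $p^*d^*=1$; since the two ends of the chain then both equal $1$, both inequalities must collapse to equalities, which yields precisely $b^\top y^*=1$ and the complementarity $\langle \mu_{y^*}C-{\cal A}^\top y^*,\,X^*\rangle=0$ (primal feasibility being automatic from optimality). For the sufficiency direction I would run the chain in reverse: complementarity forces the first inequality to be an equality and $b^\top y^*=1$ fixes the last, so the product equals $1$. Since $X^*$ is strongly feasible ($\langle C,X^*\rangle<\infty$) and $y^*$ is strongly feasible (the hypothesis $p^*>0$ with strong duality forces $d^*=1/p^*$ finite, hence $\mu_{y^*}<\infty$), part 4 of Theorem \ref{Freund-thm} immediately delivers optimality of the pair.

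The main obstacle I anticipate is not the chain itself but the bookkeeping around finiteness and strong feasibility: one must confirm that $\mu_{y^*}$ is finite, so that $\mu_{y^*}C$ is a genuine matrix and the complementarity expression is meaningful, and that the infimum in the definition of $\kappa^\circ$ is attained at $\mu_{y^*}$, which is what licenses the semidefinite inequality $\mu_{y^*}C-{\cal A}^\top y^*\succeq 0$ used to launch the chain. Both follow from $C\succeq 0$ and the standing hypotheses, but they are precisely the points at which the argument would silently fail were $C$ not positive semidefinite, so I would state them carefully rather than treat them as routine.
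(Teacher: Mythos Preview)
Your proposal is correct and follows essentially the same route as the paper: the paper's proof is precisely the chain
\[
1 = \kappa(X^*)\,\kappa^\circ({\cal A}^\top y^*) = \mu_{y^*}\langle C,X^*\rangle \ \ge\ \langle {\cal A}^\top y^*,X^*\rangle = b^\top y^* \ \ge\ 1,
\]
after first observing that optimality is equivalent to primal-dual feasibility together with $\kappa(X^*)\kappa^\circ({\cal A}^\top y^*)=1$. The only difference is cosmetic: the paper justifies the first inequality by appeal to the generic polar inequality $\langle x,y\rangle\le \kappa(x)\kappa^\circ(y)$, whereas you argue via attainment of the infimum so that $\mu_{y^*}C-{\cal A}^\top y^*\succeq 0$ and then use nonnegativity of the trace pairing of two positive semidefinite matrices; your extra care about finiteness of $\mu_{y^*}$ and attainment is sound bookkeeping that the paper leaves implicit.
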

\begin{proof}
   Under our assumption, it is clear that $X^*$ and $y^*$ are, respectively, optimal for \eqref{SDP-GP} and \eqref{SDP-GD} if and only if
 ${\cal A}X^* = b$, $X^*\succeq 0$, $b^\top y^* \geq 1$, and strong duality holds. In this case, we have
 \begin{eqnarray*}
  1 = \kappa(X^*) \kappa^\circ({\cal A}^\top y^*)
   = \mu_{y^*} \langle  C, X^*\rangle
     \geq  \langle {\cal A}^\top y^*, X^*\rangle
     =  b^\top y^* \geq 1,
\end{eqnarray*}
where  the first ``$\geq$" follows form the definition of polar function. Thus, all inequalities become  equalities.  In particular, we have $b^\top y^*=1$ and
$\langle \mu_{y^*} C - {\cal A}^\top  y^*, X^*\rangle = 0$.
\end{proof}

We note that the condition $\langle \mu_{y^*} C - {\cal A}^\top  y^*, X^*\rangle = 0$ is a combination and generalization of the conditions 4 and 5 of \cite[Prop. 3.2]{FM15}, where $C$ is restricted to the identity matrix.

\subsection{Primal solution recovery}
After obtaining $y^*$ from solving the gauge dual problem \eqref{SDP-GD}, we need to recover an optimal solution $X^*$ of \eqref{SDP-GP}.
To validate a way of doing this, we first present a simple lemma from linear algebra.
\begin{lemma}
  \label{AB>=0}
  Let $A$ and $B$ be any real symmetric positive semidefinite matrices of order $n$. If $\langle A, B\rangle = 0$, then there exists an orthonormal matrix $U$ of order $n$ such that
  both $Q^\top A Q$ and $Q^\top B Q$ are diagonal.
\end{lemma}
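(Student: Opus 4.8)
The plan is to reduce the statement to the classical fact that two commuting real symmetric matrices share a common orthonormal eigenbasis and can therefore be simultaneously diagonalized by an orthogonal matrix. The crux is to show that the hypothesis $\inner{A}{B} = 0$, together with $A, B\succeq 0$, forces $AB = 0$; once this is in hand, commutativity of $A$ and $B$ (and hence simultaneous diagonalizability) follows immediately.

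First I would rewrite the inner product as a trace, $\inner{A}{B} = \mathrm{tr}(AB)$, and exploit positive semidefiniteness through symmetric square roots. Writing $A = A^{1/2}A^{1/2}$ and $B = B^{1/2}B^{1/2}$ with $A^{1/2}, B^{1/2}\succeq 0$ symmetric, the cyclic property of the trace gives $\mathrm{tr}(AB) = \mathrm{tr}\big((A^{1/2}B^{1/2})^\top (A^{1/2}B^{1/2})\big) = \|A^{1/2}B^{1/2}\|_F^2$. Since a squared Frobenius norm vanishes only for the zero matrix, the hypothesis $\inner{A}{B} = 0$ yields $A^{1/2}B^{1/2} = 0$, and multiplying on the left by $A^{1/2}$ and on the right by $B^{1/2}$ gives $AB = 0$. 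Taking transposes and using the symmetry of $A$ and $B$, I also obtain $BA = (AB)^\top = 0$, so in particular $A$ and $B$ commute.

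With commutativity established, I would invoke the standard spectral result that a pair of commuting real symmetric matrices admits a common orthonormal eigenbasis: diagonalizing $A$ as $A = Q_0 \Lambda Q_0^\top$ and refining, within each eigenspace of $A$, by an orthonormal basis that diagonalizes the (symmetric) restriction of $B$ produces an orthogonal $Q$ for which both $Q^\top A Q$ and $Q^\top B Q$ are diagonal, which is exactly the claim. The only genuinely non-routine step is the passage from $\mathrm{tr}(AB)=0$ to $AB=0$; this is precisely where positive semidefiniteness is indispensable, since for indefinite symmetric matrices $\mathrm{tr}(AB)=0$ need not force $A$ and $B$ to commute. The square-root/Frobenius-norm identity is the cleanest device I know to make this step rigorous and, crucially, to conclude that the \emph{product} itself vanishes rather than merely its trace.
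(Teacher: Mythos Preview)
Your argument is correct. The key step---using positive semidefinite square roots to rewrite $\mathrm{tr}(AB)$ as $\|A^{1/2}B^{1/2}\|_F^2$ and thereby conclude $AB=0$ from $\langle A,B\rangle=0$---is sound, and from $AB=0=BA$ the simultaneous orthogonal diagonalization of commuting symmetric matrices is classical.

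As for comparison: the paper does not actually prove this lemma. It is introduced as ``a simple lemma from linear algebra'' and stated without proof, then invoked in the subsequent primal-recovery theorem. Your write-up therefore supplies the missing justification, and the route you take (square roots $\Rightarrow$ $AB=0$ $\Rightarrow$ commutativity $\Rightarrow$ common eigenbasis) is the standard one. One minor remark: the lemma as stated in the paper names the orthogonal matrix $U$ but then writes $Q^\top A Q$ and $Q^\top B Q$; this is a harmless typo, and your use of $Q$ in the conclusion is consistent with the displayed formulas.
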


The following theorem validates a way of recovering an optimal solution $X^*$ of \eqref{SDP-GP} from that of \eqref{SDP-GD}, and
its proof relies on the optimality conditions established in Theorem \ref{thm:opt-sdp} and Lemma \ref{AB>=0}. In comparison, the proof of the corresponding results in \cite{FM15} relies on the von Neumann's trace inequality. Though the proof here is very simple, we present it for completeness.

\begin{theorem}
  [Primal solution recovery]
  Let $y^*\in\Re^m$ be any optimal solution of the gauge dual problem \eqref{SDP-GD}.
  If $X^*\in S^n$ is optimal for \eqref{SDP-GP}, then  ${\cal A} X^* = b$ and
$X^* = U_2 T U_2^\top $ for some $T\succeq 0$.
Here, $U_2 \in \Re^{n\times r}$ is an orthonormal basis of the null space of $\mu_{y^*} C - {\cal A}^\top  y^*$.
Therefore, $X^*$ can be recovered from $y^*$ via $X^* = U_2 T^* U_2^\top $ with
$T^* \in \arg\min_{T \succeq 0} \|{\cal A} U_2 T U_2^\top  - b \|_2$.
\end{theorem}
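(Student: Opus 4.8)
The plan is to read off the structure of any primal optimal $X^*$ directly from the optimality conditions in Theorem \ref{thm:opt-sdp} and then convert that structure into the stated least-squares recovery. First I would invoke Theorem \ref{thm:opt-sdp}, whose hypotheses (existence of a dual optimum, positivity of the primal value, and strong duality) are the standing assumptions here, to obtain ${\cal A}X^* = b$, $X^*\succeq 0$, $b^\top y^* = 1$, and the complementarity condition $\langle \mu_{y^*}C - {\cal A}^\top y^*, X^*\rangle = 0$. The one ingredient not immediately visible from that theorem is that the matrix $A := \mu_{y^*}C - {\cal A}^\top y^*$ is itself positive semidefinite. I would establish this from the definition $\kappa^\circ(Z) = \inf\{\mu \geq 0 \mid \mu C - Z \succeq 0\}$ by observing that the feasible set of this infimum is closed (the preimage of the closed semidefinite cone under an affine map, intersected with $[0,\infty)$) and nonempty when $C\succeq 0$, so the infimum $\mu_{y^*}$ is attained and $\mu_{y^*}C - {\cal A}^\top y^* \succeq 0$ holds at optimality. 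I would also record that $\mu_{y^*} = \kappa^\circ({\cal A}^\top y^*) = 1/p^* > 0$ by strong duality, where $p^* > 0$ is the primal value.

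With $A\succeq 0$, $X^*\succeq 0$, and $\langle A, X^*\rangle = 0$ in hand, I would apply Lemma \ref{AB>=0} to simultaneously diagonalize $A$ and $X^*$ by an orthonormal $Q$. Since the diagonal entries of $Q^\top A Q$ and $Q^\top X^* Q$ are nonnegative and their inner product vanishes, the eigenvectors of $A$ carrying positive eigenvalues are annihilated by $X^*$; equivalently $\mathrm{range}(X^*) \subseteq \mathrm{null}(A) = \mathrm{range}(U_2)$. (Alternatively one can bypass the lemma and argue $A^{1/2}X^*A^{1/2} = 0$ directly from the vanishing trace, whence $AX^* = 0$.) Because $U_2$ is an orthonormal basis of $\mathrm{null}(A)$, the projector $U_2 U_2^\top$ fixes $\mathrm{range}(X^*)$, so $X^* = U_2 (U_2^\top X^* U_2) U_2^\top$, and setting $T = U_2^\top X^* U_2 \succeq 0$ yields the claimed form $X^* = U_2 T U_2^\top$ together with ${\cal A}X^* = b$.

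For the recovery statement I would argue that, since at least one optimal $X^*$ exists and necessarily has the form $U_2 T U_2^\top$ with $T\succeq 0$ and ${\cal A}X^* = b$, the problem $\min_{T\succeq 0}\|{\cal A}U_2 T U_2^\top - b\|_2$ attains optimal value zero. Conversely, any minimizer $T^*$ produces $X = U_2 T^* U_2^\top$ that is feasible (${\cal A}X = b$ since the residual is zero, and $X\succeq 0$ since $T^*\succeq 0$), and it automatically satisfies the complementarity condition because $A U_2 = 0$ forces $\langle A, X\rangle = \mathrm{tr}(U_2^\top A U_2 T^*) = 0$; combined with $b^\top y^* = 1$, this reestablishes all the conditions of Theorem \ref{thm:opt-sdp}, so the recovered $X$ is primal optimal.

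The main obstacle I anticipate is not the algebra of the range containment, which is routine once $A\succeq 0$ is known, but rather making airtight the two supporting facts: that the infimum defining $\kappa^\circ({\cal A}^\top y^*)$ is actually attained so that $A\succeq 0$ (this uses $C\succeq 0$ and $\mu_{y^*} > 0$ via strong duality), and that feasibility plus complementarity are genuinely sufficient for optimality of the recovered iterate, which is precisely what the converse direction of Theorem \ref{thm:opt-sdp} supplies.
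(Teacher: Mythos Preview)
Your proposal is correct and follows essentially the same route as the paper: invoke Theorem~\ref{thm:opt-sdp} for the complementarity $\langle \mu_{y^*}C-{\cal A}^\top y^*, X^*\rangle=0$, use $A:=\mu_{y^*}C-{\cal A}^\top y^*\succeq 0$ together with $X^*\succeq 0$ and Lemma~\ref{AB>=0} to force $X^*=U_2TU_2^\top$, and then read off the least-squares recovery. You supply more justification than the paper does---in particular your argument that the infimum defining $\kappa^\circ({\cal A}^\top y^*)$ is attained (hence $A\succeq 0$) and your explicit verification that any minimizer $T^*$ yields a primal optimal $X$ via the converse of Theorem~\ref{thm:opt-sdp}---whereas the paper simply asserts $A\succeq 0$ and closes with ``the remaining claim follows directly.''
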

\begin{proof}
   It follows from Theorem \ref{thm:opt-sdp} that $\langle \mu_{y^*} C - {\cal A}^\top  y^*, X^*\rangle = 0$.
   Since both  $\mu_{y^*} C - {\cal A}^\top  y^*$ and  $X^*$ are also symmetric and positive semidefinite, it follows from Lemma \ref{AB>=0} that they can be simultaneously diagonalized by an orthonormal matrix.
   Suppose that $U = (U_1,U_2)$ is an orthonormal matrix such that
\[   \mu_{y^*} C - {\cal A}^\top  y^* = U \left(
                                       \begin{array}{cc}
                                         \text{diag}(\lambda^+(\mu_{y^*} C - {\cal A}^\top  y^*)) & 0 \\
                                         0 & 0 \\
                                       \end{array}
                                     \right) U^\top ,
                                     \]
                                     where $\lambda^+(\mu_{y^*} C - {\cal A}^\top  y^*)$ contains the strictly positive eigenvalues of $\mu_{y^*} C - {\cal A}^\top  y^*$.
Then, $X^*$  has the form
$X^* = U_2 T U_2^\top $ for some $ T\succeq 0$, where the size of $T$ is equal to the dimension of the null space of $\mu_{y^*} C - {\cal A}^\top  y^*$.
The remaining claim of this theorem follows directly.
\end{proof}

We note that, when restricted to \eqref{Fried-trace} or \eqref{Fried-Cpd} with $\varepsilon=0$ and $C$ being positive definite, the primal solution recovery result presented here reduces to \cite[Corollary 6.1, Corollary 6.2]{FM15}.

\subsection{Computing a subgradient of $\kappa^\circ \circ {\cal A}^\top $}

Now, we discuss  how to compute a subgradient of $\kappa^\circ \circ {\cal A}^\top $ at any given $y$.
Let $y\in\Re^m$ and $Z_y\succeq 0$ be such that $\langle C, Z_y\rangle \leq 1$ and $\mu_y = \kappa^\circ ({\cal A}^\top y) = \langle {\cal A}^\top y, Z_y\rangle$, that is,
the supremum defining $\kappa^\circ ({\cal A}^\top y)$ is attained at $Z_y$. Then, it is easy to show that
${\cal A}Z_y \in \partial (\kappa^\circ \circ {\cal A}^\top)(y) $. Thus, the remaining issue is  how to compute $Z_y$ from the given $y$.
It is clear that  $\mu_y = 0$ if ${\cal A}^\top  y\preceq 0$, and in this case the supremum  defining $\kappa^\circ ({\cal A}^\top  y)$ is attained at $Z_y = 0$.
On the other hand, if $b^\top y\geq 1$ and $\mu {C} - {\cal A}^\top  y \not\succeq 0 $ for any finite $\mu>0$, then $\mu_y = +\infty$, in which case $y$ is essentially infeasible for \eqref{SDP-GD} and thus $Z_y$ does not exist.
Now, consider the most common case, i.e.,  $\mu_y \in(0, +\infty)$.
In this case, it is clear that the evaluation of $\mu_y$ reduces to computing the smallest generalized eigenvalue of the pencil $({\cal A}^\top y, C)$. In principle, any numerical methods relying on merely matrix-vector multiplications for such problem can be applied.
For example, $\mu_y$  can be obtained by the inverse free preconditioned Krylov subspace method \cite{GY02}, which has a black-box implementation in MATLAB named
{\tt eigifp}\footnote{Available at: \url{http://www.ms.uky.edu/~qye/software.html}} that applies to  symmetric and positive definite $C$.
%
The fact that the evaluation of $\mu_y$ can rely on numerical methods using only matrix-vector multiplications is the key to avoid  full eigenvalue decomposition  in the whole gauge dual approach for solving SDP.
%
%
%
Suppose that $\mu_y {C} - {\cal A}^\top  y$ has the following eigenvalue decomposition
\[
\mu_y {C} - {\cal A}^\top  y  = Q \Lambda_y Q^\top
= (Q_{\cal I}, Q_{\cal J}) \left(
           \begin{array}{cc}
             \text{diag}(\lambda^+(\mu_y {C} - {\cal A}^\top  y)) &   \\
               & 0 \\
           \end{array}
         \right) (Q_{\cal I},Q_{\cal J})^\top ,
\]
where $\lambda^+(\mu_y {C} - {\cal A}^\top  y)$ contains the strictly positive eigenvalues of $\mu_y {C} - {\cal A}^\top  y$,
and ${\cal I}$  and ${\cal J}$ are, respectively, index sets corresponding to the strictly positive and zero eigenvalues of $\mu_y {C} - {\cal A}^\top  y$.
Then, $Z_y = Q_{\cal J} \text{diag}(\lambda^+(Z_y)) Q_{\cal J}^\top $, where $\lambda^+(Z_y) \in \Re^{|{\cal J}|}_+$ and can determined via
\begin{eqnarray*}
\mu_y
&=&  \kappa^\circ ({\cal A}^\top y) = \inner{{\cal A}^\top  y}{Z_y} \\
&=&  \inner{{\cal A}^\top  y}{Q_{\cal J} \text{diag}(\lambda^+(Z_y)) Q_{\cal J}^\top } \\
&=&  \inner{Q_{\cal J}^\top  ({\cal A}^\top  y) Q_{\cal J}}{ \text{diag}(\lambda^+(Z_y)) } \\
&=&  \inner{\text{diag}\left(Q_{\cal J}^\top  ({\cal A}^\top  y) Q_{\cal J}\right)}{ \lambda^+(Z_y)  }.
\end{eqnarray*}
Clearly, $Z_y$ thus defined satisfies  $\inner{C}{Z_y} =  1$, $Z_y \succeq 0$, and  $\mu_y = \kappa^\circ ({\cal A}^\top y) = \inner{{\cal A}^\top  y}{Z_y}$.
As a result,  the supremum defining $\kappa^\circ ({\cal A}^\top y)$ is attained at $Z_y$. We note that intuitively   ${\cal J}$ is a small set. Thus, $Z_y$ is low rank.
In practice, one   needs to compute only one   normal vector in the null space of $\mu_y C - {\cal A}^\top y$, which is sufficient to determine a $Z_y$ and thus an element of
$\partial (\kappa^\circ \circ {\cal A}^\top) (y)$.


\subsection{The whole gauge dual approach for \eqref{SDP-GP}}
Clearly, the projection on the feasible set of \eqref{SDP-GD} is very easy. This, together with the fact that the  computation of a subgradient of  $\kappa^\circ \circ {\cal A}^\top $ at any given $y$ does not require full eigenvalue decomposition, provides the potential to design efficient algorithms for solving SDP.
We summarize below the steps for solving \eqref{SDP-GP} via gauge duality.
\begin{enumerate}
  \item Solve the gauge dual problem \eqref{SDP-GD} to obtain an optimal solution $y^*$;
  \item Compute $U_2 \in \Re^{n\times r}$,  an orthonormal basis of the null space of $\mu_{y^*} C - {\cal A}^\top  y^*$;
  \item Obtain $T^*$ from solving $\min_{T \succeq 0} \|{\cal A} U_2 T U_2^\top  - b \|_2$ and set    $X^* = U_2 T^* U_2^\top $.
\end{enumerate}

For the trace minimization problem considered in \cite{FM15}, the coefficient matrix $C$ is the identity matrix. In this case, the objective function of 
\eqref{SDP-GD} reduces to $\kappa^\circ({\cal A}^\top y) = \max(\lambda_{\max}({\cal A}^\top y), 0)$, and the computation of a subgradient of $\kappa^\circ({\cal A}^\top y)$ at any given $y$ requires only the computation of an eigenvector corresponding to the largest eigenvalue of ${\cal A}^\top y$, which is simpler than our case with a general $C\succeq 0$. Furthermore, in this case $U_2$ in Step 2 is no more than an orthonormal basis of the vector space spanned by the eigenvectors corresponding to the largest eigenvalues of $ {\cal A}^\top  y^*$. As a result, the above scheme is a generalization of that studied in \cite{FM15} from $C$ being the identity matrix to general $C\succeq 0$.

\section{Concluding remarks}\label{sec:concluding}
In this paper, we studied gauge duality theory for robust PCA and SDP problems.
Unlike the classical Lagrange dual, both the primal and the dual problems are minimization problems in gauge dual. Furthermore, the variable of the gauge dual problem is not a multiplier attendant to the primal constraints. As a consequence, a primal recovery procedure, which recovers a primal optimal solution from a dual one, needs to be designed.
Our main contribution of this paper is to validate such a way for both robust PCA and SDP via analyzing their optimality conditions.
%
%
A few  important issues need to be explored further, e.g.,
developing for nonsmooth optimization fast and efficient numerical algorithms that rely merely on subgradients, the interplay and acceleration between primal and dual intermediate inexact solutions, the influence of inexact dual solution on  primal solution recovery. It is hoped that this work could serve as a starting point for the design and analysis  of  gauge duality based numerical algorithms.

\section*{Acknowledgement}
We thank three anonymous referees for their thoughtful and insightful comments, which improved the paper greatly.


\def\cprime{$'$}

\end{document}